\def\R{\mathbb{R}}
\def\E{\mathbb{E}}
\newcommand{\tcb}[1]{\textcolor{black}{#1}}
\begin{document}
  \title{Propagation of chaos in path spaces via information theory}

        \author{Lei Li\thanks{School of Mathematical Sciences, Institute of Natural Sciences, MOE-LSC, Shanghai Jiao Tong University, Shanghai, 200240, P.R.China; Shanghai Artificial Intelligence Laboratory (leili2010@sjtu.edu.cn).}
        \and Yuelin Wang\thanks{School of Mathematical Sciences, Institute of Natural Sciences, MOE-LSC, Shanghai Jiao Tong University, Shanghai, 200240, P.R.China (sjtu$\_$wyl@sjtu.edu.cn).}
        \and Yuliang Wang\thanks{School of Mathematical Sciences, Institute of Natural Sciences, MOE-LSC, Shanghai Jiao Tong University, Shanghai, 200240, P.R.China (YuliangWang$\_$math@sjtu.edu.cn).}}

         \pagestyle{myheadings} \markboth{PROPAGATION OF CHAOS VIA INFORMATION THEORY}{LEI LI, YUELIN WANG, AND YULIANG WANG} \maketitle

          \begin{abstract}
               
              \tcb{Propagation of chaos for interacting particle systems has been an active research topic over decades. We propose an alternative approach to study the mean-field limit of the stochastic interacting particle systems via tools from information theory. In our framework, the propagation of chaos is reduced to the space for driving processes with possible lower dimension. Indeed, after applying the data processing inequality, one only needs to estimate the difference between the drifts of the particle system and the mean-field Mckean stochastic differential equation. This point is particularly useful in situations where the discrepancy in the driving processes is more apparent than the investigated processes. We will take the second order system as well as other examples for the illustration of how our framework could be used. This approach allows us to focus on probability measures in path spaces for the driving processes, avoiding using the usual hypocoercivity technique or taking the pseudo-inverse of the diffusion matrix, which might be more stable for numerical computation. Our framework is different from current approaches in literature and could provide new insight into the study of interacting particle systems. }
          \end{abstract}
\begin{keywords}  mean-field limit, interacting particle systems, relative entropy, data processing inequality, Girsanov theorem. 
\end{keywords}

 \begin{AMS} 35Q70; 60J60; 82C22
\end{AMS}

\section{Introduction}\label{sec:intro}

The interacting particle system, mostly built upon basic physical laws including Newton's second law, has received growing popularity \tcb{recent} years in the study of both natural and social sciences. Practical application of such large-scale interacting particle systems includes groups of birds \cite{cucker2007emergent}, consensus clusters in opinion dynamics \cite{motsch2014heterophilious}, chemotaxis of bacteria \cite{horstmann20031970}, etc. Despite its strong applicability, the theoretical analysis and practical computation for the interacting particle system is rather complicated, mainly due to the fact that the particle number $N$ is very large in many practical settings. One classical strategy to reduce this complexity is to study instead the ``mean-field" regime. The limiting partial differential equation (mean-field equation) is used to describe the behavior of the particle system as $N \rightarrow \infty$. \tcb{This approximation allows one to obtain a one-body model instead of the original many-body one. For instance, Jeans proposed a mean-field equation to study the galactic dynamics in 1915 \cite{jeans1915theory}. Much work has been done to study the mean-field behaviors of various kinds of interacting particle systems \cite{georges1996dynamical,lasry2007mean,lu2023two,golse2016mean,natalini2020mean} in the past decades.}


\tcb{Here, let us take the second order system as the example to explain the concepts of mean field limit and propagation of chaos.} The second-order system is described by Newton's second law for $N$ point particles driven by 2-body interaction forces and Brownian motions, satisfying the following system of stochastic differential equations (SDE):
\begin{equation}\label{eq:particle}
\left\{
\begin{aligned}
d X_i(t)  &=V_i(t) d t,\\
md V_i(t)  &=\frac{1}{N-1} \sum_{j: j \neq i} K\left(X_i(t)-X_j(t)\right) d t - \gamma V_i(t) dt+\sigma \cdot  d W_i(t),\quad 1\leq i \leq N,
\end{aligned}\right.
\end{equation}
\tcb{where $m$ and $\gamma$ represent the mass $m$ and friction coefficient respectively, $X_i(t) , V_i(t)\in \mathbb{R}^d$.  The processes $W_i(t)$ ($1 \leq i \leq N$) are independent Brownian motions in $\mathbb{R}^{d^\prime}$, and $K$: $\mathbb{R}^d \rightarrow \mathbb{R}^d$ is the interaction kernel. We assume that the initial data $\{(X_i(0),V_i(0))\}$ are $i.i.d$ drawn from some initial law $F_0^N$ independent of the Brownian motions.} Denote $Z_i(t) := (X_i(t), V_i(t))$, and the corresponding joint law
\begin{equation}
    F^N_t\left(z_1, \cdots, z_N\right)=\operatorname{Law}\left(Z_1(t), \cdots, Z_N(t)\right) \in \mathcal{P}(\mathbb{R}^{2Nd}),
\end{equation}
where $\mathcal{P}(\mathbb{R}^{2Nd})$ denotes the probability measure space on $\mathbb{R}^{2Nd}.$
Then, the evolution of the density $F_t^N$ satisfies a Liouville's equation \cite{gibbs1879fundamental,gibbs1902elementary}:

\begin{multline}\label{eq:lioville}
    \partial_t F_t^N+\sum_{i=1}^N \nabla_{x_i}\cdot(v_i  F_t^N)+\frac{1}{m}\sum_{i=1}^N \nabla_{v_i} \cdot \left(\frac{1}{N-1} \sum_{j \neq i} K\left(x_i-x_j\right) F_t^N - \gamma v_i F_t^N\right) =\\
    \frac{1}{2m^2}  \sum_{i=1}^N \nabla^2_{v_i}:( \Lambda F_t^N),
\end{multline}
with $F_t^N|_{t=0} = F_0^N.$
Note that the matrix $\Lambda$ is defined by $\Lambda := \sigma \sigma^T$. \tcb{Here, ``$:$" means the Hilbert-Schmidt inner product so that
$\nabla^2_{v_i}:( \Lambda F_t^N)=\sum_{j,k} \partial_{v_j v_k}^2 (\Lambda_{jk}F_t^N)$. As the particle number $N$ tends to infinity, the correlation between any two focused particles through the weak interaction is expected to vanish. Hence,  
if two particles are initially independent, then they are expected to be independent as $N\to\infty$ at any fixed time point $t>0$. This is the so-called propagation of chaos. Due to the asymptotic independence, a fixed particle with position and velocity $\bar{Z}_i(t):=(\bar{X}_i(t), \bar{V}_i(t))$ is then expected to satisfy the following mean field Mckean SDE system:
\begin{equation}\label{eq:mckeansde}
    d\bar{X}(t)  =\bar{V}(t) d t,\quad
md \bar{V}(t)  =K {*} \bar{\rho}_t (\bar{X}(t)) d t - \gamma \bar{V}(t) dt +\sigma \cdot d W(t),
\end{equation}
where $\bar{F}_t \in \mathcal{P}(\mathbb{R}^{2d})$ is the law, and $\bar{\rho}_t(x) := \int_{\mathbb{R}^d} \bar{F}_t(x,v) dv$ is its marginal. 
The law $\bar{F}_t$ is then expected to satisfy the following mean field kinetic Fokker-Planck equation \cite{jabin2014review,jabin2017mean}:
\begin{equation}\label{eq:meanfieldpde}
    \partial_t \bar{F}_t +\nabla_x\cdot(v \bar{F}_t) + \frac{1}{m} \nabla_v \cdot \left( K {*} \bar{\rho}_t \bar{F}_t - \gamma v \bar{F}_t \right)  = \frac{1}{2m^2}  \nabla^2_v :(\Lambda  \bar{F}_t),\quad \bar{F}_t|_{t=0} = \bar{F}_0.
\end{equation}
Rigorous justification of this mean limit, or the propagation of chaos, has then become an active research topic.
}

The prevalent method in analyzing mean-field limits is based on Dobrushin's Estimate, which is proposed in 1979 by Dobrushin etc. \cite{dobrushin1979vlasov}, to study the stability of the mean-field characteristic flow in terms of Wasserstein distances. Dobrushin-type analysis has now been a classical tool in mean-field limits for Valsov-type equations during these decades.
Based on \tcb{Dobrushin-type} analysis, one can then prove the mean-field limit for the deterministic system in a finite time interval $[0,T]$ in terms of Wasserstein distances \cite{braun1977vlasov,neunzert2006approximation,golse2016mean}. 
Another way is to compare the stochastic trajectories through certain coupling technique.
By considering trajectory controls, the mean-field limit for stochastic systems with Lipschitz kernel $K$ has been established \cite{sznitman1991topics,graham1996asymptotic,guillin2021kinetic}. 

\tcb{Another class of methods is to compare the laws directly. What has become popular recently on chaos qualification is given by the analysis of relative entropy (also called Kullback-Leibler divergence, KL-divergence) between $F_t^{N:k}=\int_{(\R^{2d})^{N-k}}F_t^N dz_{k+1}\cdots dz_N$ and $k$ tensorized product of $\bar{F}_t$, $\bar{F}_t^{\otimes k}:=\prod_{i=1}^k \bar{F}_t(z_i)$ for $1\le k\le N$. The analysis could also be performed on the laws on path space with $F_t^{N:k}$ and $\bar{F}_t^{\otimes k}$ being their time marginals. Some early results in path space using the relative entropy have been achieved in the last century (e.g. \cite{arous1990methode,arous1999increasing}). For time marginal distributions, Jabin et. al. proved the propagation of chaos for Vlasov-type systems with $\mathcal{O}(k/N)$ bound, assuming the interaction kernel $K$ is bounded, and the propagation of chaos for first order systems with singular kernels \cite{jabin2018quantitative}.  For results in path space, Lacker obtained the propagation of chaos relying on Girsanov's and Sanov's theorem \cite{lacker2018strong} and the BBGKY hierarchy \cite{lacker2023hierarchies,lacker2023sharp}. The approach in \cite{lacker2023hierarchies,lacker2023sharp} yields an $\mathcal{O}((k/N)^2)$ bound of the relative entropy between the marginal law of $k$ particles and its limiting product measure. 
For singular $L^p$-interactions, Toma{\v{s}}evi{\'c} et. al. used the the partial Girsanov transform to derive the propagation of chaos in \cite{jabir2018mean,tomavsevic2023propagation}. Recently, Hao et. al. further showed the strong convergence of the propagation of chaos with singular $L^p$-interactions in  \cite{hao2024strong}. Also, based on Lacker's approach, Cattiaux gave an $\mathcal{O}(k/N)$ estimate on the path space in \cite{cattiaux2024entropy}, by using the invariance of relative entropy under time reversal \cite{cattiaux2013singular}. The results in \cite{delarue2021uniform} and \cite{guillin2024uniform} are uniform in time for the Coulomb and the Biot-Savart kernel, respectively. There is a vast literature on this topic, and we provide recent review articles \cite{chaintron2022propagation,chaintron2022propagation2} for the convenience of readers.}

In this work, \tcb{we propose to use the information theory to study the propagation of chaos by comparing the discrepancy between the joint law of the particle system and the corresponding mean-field equation in terms of KL-divergence defined by
\begin{equation} D_{KL}\left(P \| Q \right) :=
\left\{\begin{aligned}
    \int_E \log&\frac{dP}{dQ} dP, \quad& P\ll Q,\\
    &\infty, \quad& \text{otherwise,}
\end{aligned}\right.
\end{equation}
where $P$ and $Q$ are two probability measures over some appropriate space $E$. In our framework, the propagation of chaos is reduced to the space for driving processes with possible lower dimension. We will mainly take the second-order systems as the example, which avoids using the usual hypocoercivity technique or taking the pseudo-inverse of the diffusion matrix. We remark that the bounds under relative entropy for the second order system can be obtained by direct Girsanov transform if one takes the pseudo-inverse of the degenerate diffusion matrix as mentioned in \cite[Remark 4.5]{lacker2023hierarchies}. Nevertheless, we believe our approach is still of significance as there is no degeneracy in diffusion if we look at the measures in the space for driving processes, which could be more stable for numerical computation.  We will also look at the application of our framework to other illustrating examples.}

\tcb{
We focus an estimate for the KL-divergence between the laws in path space, in particular $D_{KL}(F^N_{[0,T]} \| \bar{F}^{\otimes N}_{[0,T]})$. Here $F^N_{[0,T]}$ and $\bar{F}^{\otimes N}_{[0,T]}$ are probability distributions in the path space $\mathcal{X} := C([0,T];\mathbb{R}^{2Nd})$ (for fixed time interval $[0, T]$) corresponding to the SDE systems \eqref{eq:particle} and $N$ independent  copies of \eqref{eq:mckeansde} respectively. Denoting $\mathcal{Z}_{[0,T]} := (Z_1, \dots, Z_N)_{[0,T]}$, $\bar{\mathcal{Z}}_{[0,T]} := (\bar{Z}_1, \dots, \bar{Z}_N)_{[0,T]}$ in the path space, the path measures satisfy $F^N_{[0,T]} = \mathcal{Z}_{[0,T]} {\#}\mathbb{P}$, and $\bar{F}^{\otimes N}_{[0,T]} = \bar{\mathcal{Z}}_{[0,T]}{\#}\mathbb{P}$  ($\mathbb{P}$ is the original probability measure such that $W$ is a Brownian motion). With this setting, $F_t^N$ is the time marginal of $F^N_{[0,T]}$, and $\bar{F}_t^{\otimes N}$ is then the time marginal of $\bar{F}^{\otimes N}_{[0,T]}$. We then regard the process of the mean-field McKean SDEs and the interacting particle systems as the same dynamical system with different driving processes (input signals). Then, applying the data processing inequality, we can work on probability measures in the space for the input signals instead of the space for the particles. The former space is sometimes easier to deal with than the latter as one may avoid the degeneracy of the diffusion. Moreover, the dimension could be lower. This has similarity with the so-called latent space in machine learning \cite{liu2019latent}.} \tcb{Moreover, we will also present the applications of the framework onto neural networks and numerical analysis to illustrate this point.}

\tcb{The rest of the paper is organized as follows: In Section \ref{sec:sec2}, we present our main ideas. The result (Theorem \ref{thm:main}) on the propagation of chaos for the second-order system in path space is shown in Section \ref{sec:mainresult} for both bounded kernels (not necessarily smooth) or Lipschitz kernels (not necessarily bounded) with the necessary assumptions and auxiliary lemmas. In Section \ref{sec:discussion}, we provide two applications of our approach on numerical analysis and neural networks. Lastly in Section \ref{sec:dis}, we perform a discussion on the reversed relative entropy and mass-independence.}

\section{The main idea of the new framework}\label{sec:sec2}

In this section, \tcb{taking the second order system as the example, we present} the main ideas \tcb{without rigorous proof. The rigorous mathematical setup, assumptions and proof will be given in the next section.}

For fixed $[0,T]$, let $\bar{F}_{[0,T]}$ be the law of the trajectories of the following Mckean SDE system \eqref{eq:mckeansde}.
Then the tensorized distribution $\bar{F}^{\otimes N}_{[0,T]}$ is the law of trajectories of the following system:
\begin{equation}\label{eq:mckeansde for N}
    d\bar{X}_i(t)  =\bar{V}_i(t) d t,\quad
md \bar{V}_i(t)  =K {*} \bar{\rho}_t (\bar{X}_i(t)) d t-\gamma \bar{V}_i(t) dt+\sigma \cdot d W_i(t), \quad 1 \leq i \leq N,
\end{equation}
and the particles $\bar{Z}_i := (\bar{X}_i, \bar{V}_i)$, $1 \leq i \leq N$ are independent.

The key idea of this work is rewriting \eqref{eq:particle} above into:
\begin{equation}\label{eq:mckeansde_rewrite}
    dX_i(t)  =V_i(t) d t,\quad
md V_i(t)  =K {*} \bar{\rho}_t(X_i(t)) d t - \gamma V_i(t) dt+  d \theta_i^{(1)}(t), \quad 1 \leq i \leq N,
\end{equation}
where the process $\theta_i^{(1)}(t)$ is defined by
\begin{align} \label{eq:N_i^1}
\theta_i^{(1)}(t) :&=  \int_0^t \left(  \frac{1}{N-1} \sum_{j: j\neq i} K(X_i(s) - X_j(s)) - K {*} \bar{\rho}_s (X_i(s))\right) ds +\sigma\cdot W_i(t)\notag\\
&= \int_0^t b_i(s, X(s))\,ds+\sigma\cdot W_i(t).
\end{align}
Here, 
\begin{gather}
b_i(s, x):=\frac{1}{N-1}\sum_{j: j\neq i}K(x_i-x_j)-K*\bar{\rho}_s(x_i).
\end{gather}
We also denote
\begin{equation}\label{eq:theta2def}
\theta_i^{(2)}(t)=\sigma\cdot W_i(t).
\end{equation}
\tcb{Based on \eqref{eq:mckeansde_rewrite} and \eqref{eq:mckeansde for N}, formally, we write the generalized dynamics
\begin{equation}\label{eq:box}
    d\hat{X}_i(t)  =\hat{V}_i(t) d t,\quad
md \hat{V}_i(t)  =K {*} \bar{\rho}_t(\hat{X}_i(t)) d t - \gamma \hat{V}_i(t) dt+  d\theta_i(t), \quad 1 \leq i \leq N.
\end{equation}
Here, $\theta:=(\theta_1,\cdots,\theta_N)$ is a \tcb{driving process}. In \eqref{eq:mckeansde_rewrite}, the \tcb{driving process} is taken as the noise process $\theta^{(2)},$ while in \eqref{eq:mckeansde for N} is taken as $\theta^{(1)}.$ For fixed initial data, as shown in \eqref{illusion:dpi}, the \tcb{driving process} $\theta$ can be viewed as an input, then through the equation \eqref{eq:box}, the particle trajectory is obtained as an output.
    \vskip2mm
    \begin{equation}\label{illusion:dpi}
        \begin{tikzpicture}    
        \draw[->] (5,0)--(6,0);
        \draw (6,-0.45) rectangle (8,0.45);
        \draw[->] (8,0)--(9,0);
        \node (A) at (3.6,0) {\tcb{driving process} $\theta$};
        \node (C) at (7,0) {\eqref{eq:box}};
        \node (D) at (10.7,0) {trajectory $(X,V)$};
    \end{tikzpicture} 
    \end{equation}
From this perspective, a natural guess is that, if there is only slight difference between two \tcb{driving processes}, the difference between the outputs might be not large.}
\tcb{Luckily, if the mean field McKean SDE \eqref{eq:mckeansde} has pathwise uniqueness, the following well-known data processing inequality \cite{thomas1992information} can help to establish such intuition.}

\vskip2mm
\begin{lemma}[data processing inequality]\label{lmm:dp}
    \tcb{Consider a given conditional probability $P_{Y \mid X}$} and that $Y$ is produced by $P_{Y \mid X}$ given $X$. If $P_Y$ is the distribution of $Y$ when $X$ is generated by $P_X$, and $Q_Y$ is the distribution of $Y$ when $X$ is generated by $Q_X$, then for any convex function $f: \mathbb{R}^{+} \rightarrow \mathbb{R}$ satisfying $f(1)=0$ and being strictly convex at $x=1$, it holds
\begin{equation}
    D_f\left(P_Y \| Q_Y\right) \leq D_f\left(P_X \| Q_X\right),
\end{equation}
where the $f$-divergence $D_f(\cdot \| \cdot)$ is defined by
\tcb{\begin{equation}
    D_f(P \| Q):=\left\{\begin{aligned}
        \mathbb{E}_Q&\left[f\left(\frac{d P}{d Q}\right)\right] \quad &P\ll Q,\\
        &\infty \quad &\text{otherwise}.
    \end{aligned}\right.
\end{equation}}
\end{lemma}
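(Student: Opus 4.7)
The plan is to prove the data processing inequality by the classical argument: realize the density ratio $dP_Y/dQ_Y$ as a conditional expectation (under $Q$) of $dP_X/dQ_X$, and then invoke Jensen's inequality for the convex function $f$.

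First I would build the two joint laws obtained by feeding $P_X$ and $Q_X$ through the same channel:
\begin{equation*}
P_{XY}(dx,dy) := P_X(dx)\,P_{Y\mid X}(dy\mid x), \qquad Q_{XY}(dx,dy) := Q_X(dx)\,P_{Y\mid X}(dy\mid x).
\end{equation*}
Assuming $P_X \ll Q_X$ (otherwise both sides of the inequality are $+\infty$ and there is nothing to prove), the common conditional $P_{Y\mid X}$ forces $P_{XY}\ll Q_{XY}$ with Radon--Nikodym derivative
\begin{equation*}
\frac{dP_{XY}}{dQ_{XY}}(x,y)=\frac{dP_X}{dQ_X}(x).
\end{equation*}
Marginalizing in $x$ then gives $P_Y \ll Q_Y$ and, by the defining property of conditional expectation (or equivalently Fubini plus a disintegration of $Q_{XY}$ as $Q_Y(dy)\,Q_{X\mid Y}(dx\mid y)$),
\begin{equation*}
\frac{dP_Y}{dQ_Y}(y)=\mathbb{E}_{Q_{X\mid Y}}\!\left[\frac{dP_X}{dQ_X}(X)\,\Big|\,Y=y\right],\qquad Q_Y\text{-a.e.}
\end{equation*}

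With this representation in hand, the remainder is a one-line Jensen argument. For $Q_Y$-a.e.\ $y$, convexity of $f$ gives
\begin{equation*}
f\!\left(\frac{dP_Y}{dQ_Y}(y)\right) \le \mathbb{E}_{Q_{X\mid Y}}\!\left[f\!\left(\frac{dP_X}{dQ_X}(X)\right)\,\Big|\,Y=y\right],
\end{equation*}
and integrating against $Q_Y$ and using the tower property (or Fubini on $Q_{XY}$) yields exactly $D_f(P_Y\|Q_Y)\le D_f(P_X\|Q_X)$.

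The only nontrivial points, and the ones I would write out carefully, are the measure-theoretic justifications: (i) the existence of a regular conditional $P_{Y\mid X}$ and a disintegration $Q_{X\mid Y}$, which is standard on Polish spaces such as the path spaces used later in the paper; and (ii) the handling of $f$ at $0$ and $+\infty$ so that $D_f$ is well defined when the densities are unbounded, which is typically done by truncation and monotone convergence (noting that convexity plus $f(1)=0$ gives the usual affine lower bound on $f$, so integrals make sense in $[-\infty,+\infty]$ and Jensen applies). Strict convexity at $1$ is not needed for the inequality itself, only for the equality-case discussion, so I would simply remark on it rather than use it in the argument.
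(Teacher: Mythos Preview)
Your proposal is correct and follows essentially the same route as the paper: form the joint laws with the common channel so that $dP_{XY}/dQ_{XY}=dP_X/dQ_X$, identify $dP_Y/dQ_Y$ as the $Q_{X\mid Y}$-conditional expectation of this ratio, and apply Jensen's inequality. The paper presents the same steps slightly more tersely (starting from $D_f(P_X\|Q_X)=D_f(P_{XY}\|Q_{XY})$ and conditioning on $Y$), and omits the measure-theoretic caveats you mention, but the argument is the same.
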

\tcb{\begin{remark}
    Taking $f(x)=x\log x$, the $f$-divergence $D_f$ is the famous KL-divergence. In this paper, we focus on this special case.
\end{remark}}

\tcb{\begin{remark}
    The data processing inequality is also well-known in probability and statistics (e.g. \cite{lacker2023hierarchies}), which states that $D_{KL} (\nu\circ g^{-1}\| \nu^\prime \circ g^{-1})\le D_{KL} (\nu \circ \nu^\prime)$ for any probability measures $\nu,\,\nu^\prime$ on a common measurable space and any measurable function $g$ into another measurable space.
\end{remark}}
\vskip2mm

\tcb{Now, by the data processing inequality, we can control the KL-divergence between the output into that between the input. In this respect, we change our problem from the trajectory space into the space for the driving process $\theta$.}
Exactly, we find that
\[
D_{KL}(F^N_{[0,T]} \| \bar{F}^{\otimes N}_{[0,T]}) \le D_{KL}(Q^1 \| Q^2),
\]
where we recall $F^N_{[0,T]}$ and $\bar{F}^{\otimes N}_{[0,T]}$ are path measures introduced in Section \ref{sec:intro} and we denote $Q^j$ to be the path measures for 
\[
\theta^{(j)}:=(\theta_1^{(j)}, \cdots, \theta_N^{(j)}(t)).
\]
To compute the latter relative entropy, we rewrite the equation for $\theta^{(1)}$ by
\begin{gather}
\theta_i^{(1)}=\int_0^t b_i(s, X(s))\,ds+\sigma\cdot W_i(t)
=:\int_0^t \tilde{b}_i(s, [\theta^{(1)}]_{[0, s]})\,ds +\sigma \cdot W_i(t).
\end{gather}
Then, $\theta^{(1)}$ satisfies an SDE in the space of the \tcb{driving process}, with a dimension smaller than that of  $(X, V)$. 
\tcb{Then, by Girsanov's transform, it holds} 
\begin{gather}\label{eq:KLQaux1}
D_{KL}(Q^1 \| Q^2)
= -\mathbb{E} \log \frac{dQ^2}{dQ^1}[\theta^{(1)}]
=\tcb{\frac{1}{2}\mathbb{E} \sum_i \int_0^T \langle b_i(s, X(s)), (\sigma\sigma^T)^{-1}b_i(s, X(s))\rangle ds.}
\end{gather}
\tcb{Note that this reduction avoids the degeneracy of the diffusion coefficient. Though the degeneracy can be 
treated by using the pseudo-inverse as remarked in \cite{lacker2023hierarchies}, such a reduction could be helpful for practical estimates using numerical computations.
We will give more details in the next sections.}

\tcb{Let us discuss the choice of the noise and dynamical system. }
One may be tempted to rewrite the mean-field McKean SDE into
\begin{equation*}
    d\bar{X}_i  =\bar{V}_i d t,\quad
md \bar{V}_i  =\frac{1}{N-1} \sum_{j:j\neq i}K(\bar{X}_i - \bar{X}_j) d t - \gamma \bar{V}_i dt +  d \eta_i^{(2)}, \quad 1 \leq i \leq N,
\end{equation*}
with
\begin{equation*} 
    \eta_i^{(2)}(t) :=  \int_0^t \left(   K {*} \bar{\rho}_s (\bar{X}_i)- \frac{1}{N-1} \sum_{j: j\neq i} K(\bar{X}_i - \bar{X}_j)\right) ds +\sigma\cdot W_i(t).
\end{equation*}
Then, \tcb{the} $N$-body interacting particle system is given by
\begin{equation*}
d X_i  =V_i d t,\quad
md V_i  =\frac{1}{N-1} \sum_{j:j\neq i}K(X_i - X_j) d t - \gamma V_i dt + d \eta_i^{(1)},\quad 1\leq i \leq N,
\end{equation*}
with $\eta_i^{(1)}(t) := \sigma\cdot  W_i(t)$ $(1 \leq i \leq N)$. 

The two systems are also the same dynamical system with difference driving noises 
\[
\eta^{(j)}(\cdot):=(\eta_1^{(j)}(\cdot), \cdots, \eta_N^{(j)}(\cdot)).
\]
\tcb{At first glance,} this formulation seems good since the drift in $\eta^{(2)}$ involves only the 
solution to the mean-field McKean SDE. Then, one may apply the law of large numbers. 
However, this is not the case. In fact,  applying the data processing inequality, one has
\[
D_{KL}(F^N_{[0,T]} \| \bar{F}^{\otimes N}_{[0,T]}) \le D_{KL}(\bar{Q}^1 \| \bar{Q}^2),
\]
where $\bar{Q}^j$ is the law for $\eta^{(j)}$.
\tcb{We consider
\[
\eta_i^{(2)}:=-\int_0^t b_i(s, \bar{X}(s))\,ds+\sigma\cdot W_i(t)
=-\int_0^t b_i(s, \pi_s\circ \hat{\Phi}_s(\eta^{(2)}))\,ds+\sigma\cdot W_i(t).
\] }
Here, the mapping $ \hat{\Phi}_s : \eta \mapsto (X, V)$
is the solution map \tcb{for} the $N$-body interacting dynamical system and $\pi_s f=f(s)$ is the time marginal.
\tcb{This is again an SDE in the space for the driving process.} Then,
\begin{equation*}
D_{KL}(\bar{Q}^1 \| \bar{Q}^2) = \mathbb{E}_{X \sim \bar{Q}^1}\left[-\log \frac{d\bar{Q}^2}{d\bar{Q}^1}(X) \right].
\end{equation*}
The point is that the Radon-Nykodym derivative is integrated \tcb{against} $\bar{Q}^1$. The Girsanov's transform then gives that
\tcb{\begin{gather}
\begin{aligned}
    \mathbb{E}_{X \sim \bar{Q}^1}\left[-\log \frac{d\bar{Q}^2}{d\bar{Q}^1}(X) \right]&
=\sum_i \mathbb{E}\int_0^t \frac{1}{2}\langle b_i(s, \pi_s\circ\hat{\Phi}_s(\eta^{(1)})),\Lambda^{-1}b_i(s, \pi_s\circ\hat{\Phi}_s(\eta^{(1)}))\rangle \,ds\\
&=\sum_i \mathbb{E}\int_0^t \frac{1}{2}\langle b_i(s, X(s)),\Lambda^{-1}b_i(s, X(s))\rangle\,ds,
\end{aligned}
\end{gather}}
where the inside is changed from $\eta^{(2)}$ to $\eta^{(1)}$!
The eventual result is the same as \eqref{eq:KLQaux1}.

\section{The application to the second order systems}\label{sec:mainresult}

In this section, we establish the propagation of chaos in path space for the second order systems using the framework of information theory, in particular the data processing inequality.

\tcb{We first present our assumptions on the kernels and coefficients. The first set of assumptions requires that $K$ is bounded. }
\vskip2mm
\begin{assumption}\label{ass:K}
\quad
    \begin{itemize}
        \item[(a)] The kernel $K$ has finite essential bound, namely, $\| K \|_{L^{\infty}(\mathbb{R}^d)} < +\infty$. 
        \item[(b)] The matrix $\Lambda = \sigma \sigma^T$ is non-degenerate with minimum \tcb{eigenvalue} $\lambda>0.$
    \end{itemize}
\end{assumption} 
\vskip1mm
\tcb{\begin{remark}
    In our main text, the matrix $\sigma$ is a constant matrix for notational convenience. However, a time- and state-dependent diffusion $\sigma(t,X_i,V_i)$ is allowed as long as the spectrum of $\Lambda = \sigma\sigma^T$ is uniformly bounded above and away from zero and the well-posedness results in the following subsection preserves. It is similar with \cite[Remark 4.5]{lacker2023hierarchies}.
\end{remark}}
 \vskip2mm
\tcb{The boundedness condition for the interaction kernel $K$ (condition (a) in Assumption \ref{ass:K} above) sometimes is strong in practice.
Here, if we assume that the initial distribution has a fast decaying tail, we can allow a Lipschitz kernel. In fact, we will assume also alternatively the followings:}
\vskip2mm
\begin{assumption}\label{ass:new}
\quad
    \begin{itemize}
        \item[(a)] \tcb{The initial space-marginal distribution of the Mckean SDE \eqref{eq:mckeansde} is sub-Gaussian, namely, there exists $C>0$ such that for any $a \geq 0$, $P(|\bar{X}_1(0)| > a) \leq 2 \exp(-a^2 / C^2)$.}
        \item[(b)] \tcb{The interaction kernel $K(\cdot)$ is $L_K$-Lipschitz, namely, $\forall x,y \in\mathbb{R}^d$, $|K(x) - K(y)| \leq L_K |x-y|$.}
        \item[(c)] The matrix $\Lambda = \sigma \sigma^T$ is non-degenerate with minimum \tcb{eigenvalue} $\lambda>0.$
    \end{itemize}
\end{assumption}

\tcb{\subsection{The well-posedness of the mean field McKean SDE.}}

\tcb{Under either Assumption \ref{ass:K} or \ref{ass:new}, we are able to establish the propagation of chaos using nearly the same method. As a first step, we consider the solution map of \eqref{eq:mckeansde}.  For {\it fixed initial data}, we rewrite it as}
\begin{equation}\label{eq: temphat}
\begin{split}
&\hat{X}_i(t)  =\hat{X}_i(0)+\int_0^t\hat{V}_i(s) d s,\\
&m\hat{V}_i(t)  =m\hat{V}_i(0)+\int_0^t K {*} \bar{\rho}_s(\hat{X}_i(s)) d s - \gamma \int_0^t\hat{V}_i(s) ds+ \hat{\theta}_i(t),\quad 1\leq i \leq N.
\end{split}
\end{equation}
\tcb{We first have the following observation.}

\tcb{\begin{lemma}
Suppose that either Assumption \ref{ass:K} or Assumption \ref{ass:new} holds. Then, 
the mean field nonlinear kinetic Fokker-Planck equation \eqref{eq:meanfieldpde} has a unique solution that is 
in $C([0, T]; \mathcal{P}(\R^d))$ where the topology is the weak convergence of measures. Moreover, the solution is smooth 
for any $t>0$.
\end{lemma}}
\vskip2mm

\tcb{The result under Assumption \ref{ass:new} is very standard because the corresponding SDE system even has strong solutions.
For the first, the well-posedness under some more general singular kernels have been established as well.
One may refer to \cite{krylov2005strong, zhang2010stochastic, hao2024strong} for related discussion.}

\tcb{As soon as we have the well-posedness for the nonlinear Fokker-Planck equation, then $K*\bar{\rho}_t$ is smooth for any $t>0$,
and thus locally Lipschitz. Now, we take $t\mapsto \bar{\rho}_t$ as given. We conclude the following.}

\tcb{\begin{lemma}
Suppose that either Assumption \ref{ass:K} or Assumption \ref{ass:new} holds. Then, the following integral equation has a unique continuous solution.
\begin{gather}
\begin{split}
& X(t)=X_0+\int_0^t V(s)\,ds,\\
& mV(t)=mV_0+\int_0^t K*\bar{\rho}_s(X(s))\,ds-\gamma\int_0^t V(s)\,ds+\eta(t),
\end{split}
\end{gather}
where $t\mapsto \eta(t)$ is a given continuous driving signal.
\end{lemma}}
\vskip2mm

\tcb{For the uniqueness, it is relatively straightforward. In fact, for any two continuous solutions and given $T>0$, they stay in a compact set. On this compact set, $K*\bar{\rho}_t$
is Lipschitz on $[\epsilon, T]$ for any $\epsilon>0$. The integral on $[0, \epsilon]$ can be made arbitrarily small.
The uniqueness can then be obtained by direct comparison.
For the existence, one may consider the regularized equation where $\bar{\rho}_t$ is redefined to be $\bar{\rho}_{\epsilon}$ for $t\in [0, \epsilon]$.
The obtained solution $(X^{\epsilon}(t), V^{\epsilon}(t))$ can be shown to be uniformly bounded. Then, it is not hard to show they are relatively compact in $C([0, T]; \R^d)$ by the Arzela-Ascoli criterion, with any limit point being a solution of the integral equation.}

With the above fact, the mean field McKean SDE \eqref{eq:mckeansde} actually has a unique strong solution. For a fixed time $t$, we may introduce the mapping
\begin{equation}\label{eq:PhiTm}
    \Phi_t : \quad \hat{\theta} \mapsto \hat{\mathcal{Z}} := (\hat{Z}_1, \dots, \hat{Z}_N),
\end{equation}
where $\hat{\theta} = (\hat{\theta}_1,\dots,\hat{\theta}_N)\in C([0, t]; \mathbb{R}^{Nd})$ is a generic \tcb{driving process}, $\hat{Z}_i (\cdot) := (\hat{X}_i (\cdot), \hat{V}_i (\cdot)),$ and $\hat{\mathcal{Z}}\in C([0, t]; \mathbb{R}^{2Nd})$ is the \tcb{solution} of the dynamical system \eqref{eq: temphat}.

For fixed $t$, $\Phi_t$ only depends on $\theta_s$ for $s\le t$.  If we change $t$, the solution process will clearly agree on the common subinterval. Below, we will consider varying $t$, but we will not change the notation $\hat{\theta}$ for convenience.
Moreover, the dependence on the initial data is also not written out explicitly for clarity.
Consequently, recalling the definitions $\mathcal{Z}_{[0,T]} = (Z_1, \dots, Z_N)$, $\bar{\mathcal{Z}}_{[0,T]} = (\bar{Z}_1, \dots, \bar{Z}_N),$ and $Z_i(t) = (X_i(t), V_i(t))$, $\bar{Z}_i(t) = (\bar{X}_i(t), \bar{V}_i(t))$, then  one has 
\begin{equation}\label{eq:mapping}
\mathcal{Z}_{[0,T]} = \Phi_T(\theta^{(1)}_{[0,T]}), \quad  \bar{\mathcal{Z}}_{[0,T]} = \Phi_T(\theta^{(2)}_{[0,T]}).
\end{equation}
With the conditions above, next we establish the propagation of chaos result  for distributions starting \tcb{from a} chaotic configuration (i.e., $F_0^N=\bar{F}_0^{\otimes N}$).

\subsection{Propagation of chaos in path space and the corollaries.}

\tcb{We again note a fact from standard SDE theory.}

\tcb{\begin{lemma}
Suppose that either Assumption \ref{ass:K} or Assumption \ref{ass:new} holds. The interacting particle system
\eqref{eq:particle} has a weak solution unique in law.
\end{lemma}}
\vskip2mm
\tcb{The existence of weak solution for bounded $K$ follows from a standard Girsanov transform \tcb{(see e.g. \cite[Theorem 8.6.5]{oksendal2013stochastic}, \cite[Theorem 27.1]{rogers2000diffusions}, \cite[Theorem 2.1]{leonard2012girsanov}).} 
The uniqueness in law for bounded kernels is also standard and one may refer to the discussion in \cite[page 155, Chapter 4, Section 18]{rogers2000diffusions}.}

\tcb{The weak well-posedness of the SDE implies that the Liouville equation \eqref{eq:lioville} has weak solutions. 
The uniqueness of the Liouville equation \eqref{eq:lioville} can also be established with the bounded or Lipschitz assumption on $K$ (see e.g. \cite{rockner2010weak}). It is straightforward to see that if the initial $F^N_0$ is symmetric, $F^N$ is symmetric due to the fact that $t \rightarrow F^N_t(p(z))$ satisfies the same Liouville equation as $t \rightarrow F^N_t(z)$, where $p(z)$ is an arbitrary permutation for $z \in (\mathbb{R}^{2d})^N$ (see, for instance,  a similar argument in \cite{muntean2016macroscopic}). Similar argument also applies to the law in the path space. In fact, for any weak solution $Z$, it is not hard to see $p(Z)$ is also a weak solution. Then, the uniqueness in law implies that the law in the path space is symmetric. This in fact arises from the exchangeability of the particle systems.}

\tcb{Next, we have the following result under Assumption \ref{ass:new}.}
\vskip2mm
\begin{lemma}
\tcb{Suppose that Assumption \ref{ass:new} holds. Then, the following statements hold.}
\begin{enumerate}
    \item \tcb{For any $t \in [0,T]$, the solution of the mean field McKean SDE \eqref{eq:meanfieldpde} is sub-Gaussian. }
    \item \tcb{The interaction kernel $K(\cdot)$ and the marginal distribution $\bar{\rho}_t$ of the McKean SDE \eqref{eq:mckeansde} satisfy: there exist $C>0$ such that $\forall x,y \in \mathbb{R}^d$ and $t \in [0,T]$, $|K(x-y) - K {*} \bar{\rho}_t(x)| \leq C(1 + |y|)$. }
\end{enumerate}
\end{lemma}
\vskip2mm
\tcb{The first claim can be verified by calculating $\E \,\exp(c(|\bar{X}|^2+|\bar{V}|^2))$ via It\^o's formula. The second one is actually also obvious
by the first-order moment bound for $\bar{X}(t)$, which is obvious under Assumption \ref{ass:new}.
Below, we present and prove the main result in this section.}
\vskip2mm
\begin{theorem}\label{thm:main}
    For fixed time interval $[0,T]$, assume that \tcb{either Assumption \ref{ass:K} or Assumption \ref{ass:new} holds. Consider the path measure $F^N_{[0,T]}$  for the weak solution to the second-order system \eqref{eq:particle}, with initial law $F^N_0 = \bar{F}^{\otimes N}_0$.}  Then, there exists a constant $C$ such that
    \begin{equation}\label{eq:thm1result}
        D_{KL}\left(F^N_{[0,T]} \| \bar{F}^{\otimes N}_{[0,T]}\right) \leq Ce^{CT}.
    \end{equation}
 Consequently,  for $1 \leq k \leq N$,
    \begin{equation}\label{eq:kthmarginal}
        D_{KL}\left(F^{N:k} \| \bar{F}^{\otimes k} \right) \leq Ce^{CT} \frac{k}{N}.
    \end{equation}
\end{theorem}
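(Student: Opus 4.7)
The plan is to follow the framework from Section~\ref{sec:sec2}: invoke the data processing inequality via the solution map $\Phi_T$ from \eqref{eq:PhiTm} to reduce the problem to bounding $D_{KL}(Q^1\|Q^2)$, evaluate this by Girsanov's transform, and then close a Grönwall estimate.

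First I would verify that $\Phi_T$ is well defined and measurable. Under Assumption~\ref{ass:K}(a) the convolution $K*\bar{\rho}_t$ is uniformly bounded, so for any continuous driving path $\hat{\theta}$ the integral equation \eqref{eq: temphat} admits a unique continuous solution, continuous in $\hat{\theta}$ under the sup norm; in particular $\Phi_T$ is Borel measurable. Combined with the identifications $\mathcal{Z}_{[0,T]}=\Phi_T(\theta^{(1)}_{[0,T]})$ and $\bar{\mathcal{Z}}_{[0,T]}=\Phi_T(\theta^{(2)}_{[0,T]})$, Lemma~\ref{lmm:dp} with $f(x)=x\log x$ yields $D_{KL}(F^N_{[0,T]}\|\bar{F}^{\otimes N}_{[0,T]}) \le D_{KL}(Q^1\|Q^2)$.

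Next I would execute the Girsanov computation sketched in \eqref{eq:KLQaux1}. The drift $b_i$ defined in \eqref{eq:N_i^1} satisfies $|b_i|\le 2\|K\|_{L^\infty}$, so Novikov's condition holds trivially. Using Assumption~\ref{ass:K}(b) to invert $\Lambda=\sigma\sigma^T$ on the noise space,
\[
D_{KL}(Q^1\|Q^2)=\tfrac{1}{2}\,\E\sum_{i=1}^N\int_0^T b_i(s,X(s))^T\Lambda^{-1}b_i(s,X(s))\,ds \le \tfrac{1}{2\lambda}\sum_{i=1}^N\int_0^T \E|b_i(s,X(s))|^2\,ds.
\]
The main obstacle is to bound $\sum_i \E_{F^N_s}|b_i(s,\cdot)|^2$ by a constant \emph{uniform in} $N$: the naive pointwise bound $|b_i|\le 2\|K\|_{L^\infty}$ yields only $O(N)$ and hence a useless estimate of order $CNT$. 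I would invoke the Donsker--Varadhan variational inequality
\[
\alpha\,\E_{F^N_s}\sum_i |b_i(s,\cdot)|^2 \le \log \E_{\bar{F}^{\otimes N}_s}\exp\!\Bigl(\alpha\sum_i |b_i(s,\cdot)|^2\Bigr) + D_{KL}(F^N_s\|\bar{F}^{\otimes N}_s).
\]
Under $\bar{F}^{\otimes N}_s$ the particles are i.i.d., and $b_i=\tfrac{1}{N-1}\sum_{j\neq i}\psi_s(X_i,X_j)$ with $\psi_s(x,y):=K(x-y)-K*\bar{\rho}_s(x)$ centered in $y$ conditional on $X_i$; a combinatorial chaos/Hoeffding-type expansion (in the spirit of Jabin--Wang) then gives $\log \E_{\bar{F}^{\otimes N}_s}\exp\bigl(\alpha\sum_i |b_i|^2\bigr) \le C$ uniformly in $N$ for some sufficiently small $\alpha>0$. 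Combined with $D_{KL}(F^N_s\|\bar{F}^{\otimes N}_s)\le u(s):=D_{KL}(F^N_{[0,s]}\|\bar{F}^{\otimes N}_{[0,s]})$ (applying DPI again to the time-evaluation map), this yields $\sum_i \E|b_i(s,X(s))|^2 \le C_1+C_2\,u(s)$.

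Inserting this into the Girsanov bound produces $u(T) \le \frac{1}{2\lambda}\int_0^T (C_1+C_2 u(s))\,ds$, so Grönwall's inequality delivers $u(T) \le C\,e^{CT}$, which is \eqref{eq:thm1result}. The marginal bound \eqref{eq:kthmarginal} then follows from the standard sub-additivity inequality $D_{KL}(F^{N:k}\|\bar{F}^{\otimes k}) \le \tfrac{k}{N}\,D_{KL}(F^N\|\bar{F}^{\otimes N})$ valid for the $k$-marginal of an exchangeable law against a product reference, applied either at each time or directly in path space.
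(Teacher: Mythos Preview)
Your proposal is correct and follows essentially the same route as the paper: reduce via the data processing inequality and the solution map $\Phi_T$ to the noise-space relative entropy, compute it by Girsanov, change measure to the product law via the Donsker--Varadhan (Fenchel--Young) inequality together with a Jabin--Wang/Hoeffding-type exponential moment bound, and close by Gr\"onwall and the sub-additivity of relative entropy (the paper's Lemmas~\ref{lmm:changemeasure}, \ref{lmm:exp_estimate}, \ref{lmm:kl}). The only cosmetic differences are that the paper handles the random initial data by conditioning and then using convexity of $D_{KL}$, and applies the change of measure to each index $i$ separately (so that only the per-$i$ conditional exponential bound of Lemma~\ref{lmm:exp_estimate} is needed), whereas you apply Donsker--Varadhan once to the full sum $\sum_i|b_i|^2$; both variants yield the same Gr\"onwall inequality.
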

\vskip2mm
\begin{proof}
Recall \tcb{equations} \eqref{eq:mckeansde for N}-\eqref{eq:theta2def}. Note that we consider the weak solution to \eqref{eq:particle}.
Hence, the Brownian motions are not necessarily in the same space. However, since the McKean SDE has a strong solution, we may without loss of generality to take the Brownian motions in \eqref{eq:mckeansde for N} to be the ones used for the weak solutions of \eqref{eq:particle}, without altering the laws.

The corresponding \tcb{driving process} in the path space are $$\theta^{(j)}_{[0,T]} := \left(\theta^{(j)}_1(\cdot),\dots, \theta^{(j)}_N(\cdot)\right)_{0\le t\le T}\in C([0, T]; \mathbb{R}^{Nd}) \text{ for } j = 1,2.$$

Let $F^N_{[0,T]}(\cdot|z)$ denote the law of 
$\mathcal{Z}_{[0,T]} =(Z_1,\cdots, Z_N)$ (recall that $Z_i=(X_i, V_i)$) with initial data $\mathcal{Z}(0)=z\in \mathbb{R}^{Nd}$ and $\bar{F}^N_{[0,T]}(\cdot|z)$ is similarly defined. 
Then, for initial data obeying the distribution $\Bar{F}_0^{\otimes N}$, one has
\begin{gather}\label{eq:lawmixture}
F^N_{[0,T]}=\int_{\mathbb{R}^{Nd}} F^N_{[0,T]}(\cdot|z) \Bar{F}_0^{\otimes N}(dz),
\quad \bar{F}^{\otimes N}_{[0,T]}=\int_{\mathbb{R}^{Nd}} \bar{F}^N_{[0,T]}(\cdot|z) \Bar{F}_0^{\otimes N}(dz).
\end{gather}
By the data processing inequality (Lemma \ref{lmm:dp}), one has that
\begin{equation}\label{eq:afterdataprocess}
    D_{KL}(F^N_{[0,T]}(\cdot|z) \| \bar{F}^{N}_{[0,T]}(\cdot| z)) \leq D_{KL}(Q^1 \| Q^2) = \mathbb{E}_{X \sim Q^1}\left[-\log \frac{dQ^2}{dQ^1}(X) \right],
\end{equation}
where $Q^1$, $Q^2$ are path measures generated by $\theta^{(1)}_{[0,T]}$ and $\theta^{(2)}_{[0,T]}$ , respectively, corresponding to the time interval $[0,T]$. Namely, $Q^1 = \theta^{(1)}_{[0,T]}{\#} \mathbb{P}$, and $Q^2 = \theta^{(2)}_{[0,T]}{\#} \mathbb{P}$. 
By definition of the process $\theta^{(1)}_{[0,T]}$, $\theta^{(2)}_{[0,T]}$, $Q^2 \ll Q^1$ and the Radon-Nikodym derivative $\frac{dQ^2}{dQ^1}$ exists. 
One can find the expression of this Radon-Nikodym derivative explicitly by Girsanov's transform.  In fact, denote the \tcb{$Nd$-dimensional} vector $\boldsymbol{b}(s, x) = (\boldsymbol{b}^T_1,\cdots,\boldsymbol{b}^T_N)^T$ with
\begin{equation*}
    \boldsymbol{b}_i(s, x) :=\sigma^T \Lambda^{-1} \left( K {*} \rho_s (x_i) - \frac{1}{N-1} \sum_{j: j\neq i} K(x_i- x_j)\right).
\end{equation*}
Note that 
\[
\boldsymbol{b}(s, X(s)) = \boldsymbol{b}(s,  \pi_s\circ \Phi_s(\theta^{(1)}_{[0,s]})) =: \tilde{\boldsymbol{b}}(s, 
[\theta^{(1)}]_{[0,s]}),
\]
 where $\Phi_s$ is defined in \eqref{eq:PhiTm}, and $\pi_s$ maps $X_{[0,s]}$ in  path space to its time marginal, namely, $\pi_s(X_{[0,s]}) = X_s$.  Then the Girsanov's transform asserts that the Radon-Nikodym derivative in the path space satisfies 
\begin{align}\label{eq:aftergirsanov}
    \frac{dQ^2}{dQ^1}(\theta^{(1)}(\omega)) &= \exp\Big(\int_0^T   \tilde{\boldsymbol{b}}(s, [\theta^{(1)}]_{[0,s]}) \cdot  dW_s    - \frac{1}{2} \int_0^T \left| \tilde{\boldsymbol{b}}(s, [\theta^{(1)}]_{[0,s]}) \right|^2 ds\Big)\notag\\
    &= \exp\Big(\int_0^T   \boldsymbol{b}(s, X(s)) \cdot  dW_s    - \frac{1}{2} \int_0^T \left|\boldsymbol{b}(s, X(s)) \right|^2 ds\Big).
\end{align}
\tcb{In Appendix \ref{app:girsanov}, we present a formal derivation of the details for \eqref{eq:aftergirsanov}. The strict proof can be found in many text books, e.g. \cite[Theorem 8.6.5]{oksendal2013stochastic}, \cite[Theorem 27.1]{rogers2000diffusions}, \cite[Theorem 2.1]{leonard2012girsanov}.}
Since
\begin{align*}
    |\boldsymbol{b}(s, X(s)\tcb{)}|^2 &= \sum_{i=1}^N \left| \sigma^T \Lambda^{-1}\left( K {*} \bar{\rho}_s (X_i(s)) - \frac{1}{N-1} \sum_{j: j\neq i} K(X_i(s) - X_j(s)) \right) \right|^2\\
    &\leq \frac{1}{\lambda} \sum_{i=1}^N \left|  K {*} \bar{\rho}_s (X_i(s)) - \frac{1}{N-1} \sum_{j: j\neq i} K(X_i(s) - X_j(s)) \right|^2,
\end{align*}
one has by combining \eqref{eq:afterdataprocess} and \eqref{eq:aftergirsanov} that
    \begin{multline}
    D_{KL}(F^N_{[0,T]}(\cdot | z) \| \bar{F}^{N}_{[0,T]}(\cdot | z)) \leq \\
    \frac{1}{2\lambda} \sum_{i=1}^N \int_0^T \mathbb{E} \left|  K {*} \bar{\rho}_s (X_i(s)) - \frac{1}{N-1} \sum_{j: j\neq i} K(X_i(s) - X_j(s)) \right|^2 ds.
    \end{multline}
Moreover, due to the fact \eqref{eq:lawmixture} and the convexity of the KL-divergence, one has by Jensen's inequality that
\begin{equation}\label{eq:eq:afterjensen}
    D_{KL}(F^N_{[0,T]} \| \bar{F}^{\otimes N}_{[0,T]})
    \leq \frac{1}{2\lambda} \sum_{i=1}^N \int_0^T \mathbb{E} \left|  K {*} \bar{\rho}_s (X_i(s)) - \frac{1}{N-1} \sum_{j: j\neq i} K(X_i(s) - X_j(s)) \right|^2 ds,
\end{equation}
where the expectation on the right hand is now the full expectation.

Next, we estimate \eqref{eq:eq:afterjensen}. \tcb{We separately estimate this under Assumption \ref{ass:K} (bounded $K$) or Assumption \ref{ass:new} (unbounded $K$).}

\tcb{\textbf{Case 1}: Under Assumption \ref{ass:K}.}

We first split the right hand side into \eqref{eq:eq:afterjensen} \tcb{into} 
\begin{multline*}
     \sum_{i=1}^N\left|  K {*} \bar{\rho}_s (X_i(s)) - \frac{1}{N-1} \sum_{j: j\neq i} K(X_i(s) - X_j(s)) \right|^2 \\
     = \frac{1}{(N-1)^2}\sum_{i=1}^N \sum_{j: j \neq i} |A_{i,j}^\prime(s)|^2 + \frac{1}{(N-1)^2}\sum_{i=1}^N \sum_{j_1,j_2: j_1 \neq j_2, j_1 \neq i, j_2 \neq i}A_{i,j_1}^\prime(s) \cdot A_{i,j_2}^\prime(s),
\end{multline*}
where $A_{i,j}^\prime(t)$ is defined by
\begin{equation*}
    A_{i,j}^\prime(t) := K\left(X_i(t) - X_{j}(t) \right) - K {*} \bar{\rho}_t\left(X_i(t) \right). 
\end{equation*}
Since $K \in L^{\infty}$ by Assumption \ref{ass:K}, it is easy to see that for $N \geq 2$, the first term above is bounded by $8\|K\|_{\infty}^2$. For the second term, for any fixed $i$,  choosing $\rho = \rho^N_s$ (the time marginal distribution for particle position $X_s = (X_1(s) \dots X_N(s))$ at time $s$) and $\tilde{\rho} = \bar{\rho}^{\otimes N}_s$ in Lemma \ref{lmm:changemeasure} (as we shall present in Section \ref{sec:auxilliary}), for any $\eta > 0$ we have
\begin{multline*}
    \mathbb{E}\left[\frac{1}{N-1} \sum_{j_1,j_2: j_1 \neq j_2, j_1 \neq i, j_2 \neq i}A_{i,j_1}^\prime(s) \cdot A_{i,j_2}^\prime(s)\right]\\
    \leq \eta^{-1}D_{KL}\left(\rho^N_s \| \bar{\rho}^{\otimes N}_s \right) +\eta^{-1}\log  \mathbb{E}\left[\exp\left(\frac{\eta}{N-1}  \sum_{j_1,j_2: j_1 \neq j_2, j_1 \neq i, j_2 \neq i} A_{i,j_1}(s) A_{i,j_2}(s)\right)\right],
\end{multline*}
where $A_{i,j}(t)$ is defined by
\begin{equation*}
    A_{i,j}(t) := K\left(\bar{X}_i(t) - \bar{X}_{j}(t) \right) - K {*} \bar{\rho}_t\left(\bar{X}_i(t) \right). 
\end{equation*}
 Consider the map $T_s$: $Z_{[0,s]} \mapsto X_s$, by \tcb{the} data processing inequality (Lemma \ref{lmm:dp}) we know that
\begin{equation*}
    D_{KL}\left(\rho^N_s \| \bar{\rho}^{\otimes N}_s \right) \leq D_{KL}\left(F^N_{[0,s]} \| \bar{F}^{\otimes N}_{[0,s]} \right).
\end{equation*}
Also, Lemma \ref{lmm:exp_estimate} in Section \ref{sec:auxilliary} states that for $\eta \in (0,1/(4\sqrt{2}e\|K\|^2_{\infty}))$,
\begin{equation*}
    \sup_{N \geq 2, s \geq 0}\mathbb{E}\left[\exp\left(\frac{\eta}{N-1}  \sum_{j_1,j_2: j_1 \neq j_2, j_1 \neq i, j_2 \neq i} A_{i,j_1}(s) A_{i,j_2}(s)\right)\right] \leq \frac{1}{1 - 4\sqrt{2}e\|K\|^2_{\infty}\eta} < \infty.
\end{equation*}

Hence, considering the averaged summation $\frac{1}{N-1}\sum_{i=1}^N (\cdot)$ for $N \geq 2$ and combining all the above, one obtains
\begin{equation} \label{eq: kl for FN barFN}
    D_{KL}(F^N_{[0,T]} \| \bar{F}^{\otimes N}_{[0,T]}) \leq \frac{1}{2\lambda}C(\eta)T + \int_0^T \frac{1}{\lambda}\eta^{-1}D_{KL}\left(F^N_{[0,s]} \| \bar{F}^{\otimes N}_{[0,s]} \right) ds,
\end{equation}
where $C(\eta) := 8\|K\|_{\infty}^2 + \frac{2}{\eta} \log \frac{1}{1 - 4\sqrt{2}e\|K\|^2_{\infty} \eta}$. The result \eqref{eq:thm1result} is obtained after the Gr\"onwall's inequality: 
\begin{equation*}
\begin{aligned}
     D_{KL}(F^N_{[0,T]}  \| \bar{F}^{\otimes N}_{[0,T]}) &\leq \frac{C(\eta)}{2\lambda}T + \int_0^T \frac{C(\eta)}{2\lambda} \frac{1}{\lambda\eta} s e^{(\lambda\eta)^{-1}(T-s)} ds\\
     &=C(\eta)\frac{\eta}{2}\left(e^{(\lambda\eta)^{-1}T} - 1 \right) \leq Ce^{CT},
\end{aligned}
\end{equation*}
where $C$ is a positive constant independent of the particle number $N$ and the particle mass $m$. For instance, if we choose $\eta = (8\sqrt{2}e\|K\|_{\infty}^2)^{-1}$, then we can choose $C = \max(C_1, C_2)$ with $C_1 := \frac{\sqrt{2}}{4e}+\log 2$ and $C_2 := 8\sqrt{2}e\|K\|_{\infty}^2\lambda^{-1}$.

\vskip1mm
\tcb{\textbf{Case 2}: Under Assumption \ref{ass:new}.}

\tcb{Now we consider the case for the unbounded interaction kernel. First, for fixed $i$, still by Lemma \ref{lmm:changemeasure}, for any $\eta>0$, we have (recalling the notations $A_{i,j}$ and $A'_{i,j}$ above)
\begin{multline}
    \mathbb{E}\sum_{i=1}^N\left|  K {*} \bar{\rho}_s (X_i(s)) - \frac{1}{N-1} \sum_{j: j\neq i} K(X_i(s) - X_j(s)) \right|^2
    \leq \eta^{-1}D_{KL}\left(F^N_{[0,s]} \|\bar{F}^{\otimes N}_{[0,s]} \right)\\
    + \eta^{-1}\log \mathbb{E}\left[\exp\left(\eta \sum_{i=1}^N \left| K {*} \bar{\rho}_s (\bar{X}_i(s)) - \frac{1}{N-1} \sum_{j: j\neq i} K(\bar{X}_i(s) - \bar{X}_j(s))\right|^2\right)\right].
\end{multline}
Now note that 
\begin{equation}
    \mathbb{E}\left[K {*} \bar{\rho}_s (\bar{X}_i(s)) - \frac{1}{N-1} \sum_{j: j\neq i} K(\bar{X}_i(s) - \bar{X}_j(s))\right] = 0.
\end{equation}
Moreover, under Assumption \ref{ass:new}, $\bar{X}_i(s)$ is a sub-Gaussian random variable, and
\begin{equation}
    \left|K {*} \bar{\rho}_s (\bar{X}_i(s)) - \frac{1}{N-1} \sum_{j: j\neq i} K(\bar{X}_i(s) - \bar{X}_j(s)) \right| \leq C(1 + |\bar{X}_j(s)|).
\end{equation}
Therefore, the conditions required in Lemma \ref{lmm:lidu} are satisfied. Consequently, we have the similar estimate under Assumption \ref{ass:new}:
\begin{equation}
     D_{KL}(F^N_{[0,T]} \| \bar{F}^{\otimes N}_{[0,T]}) \leq \frac{CT}{2\lambda}T + \int_0^T \frac{C'}{\lambda}D_{KL}\left(F^N_{[0,s]} \| \bar{F}^{\otimes N}_{[0,s]} \right) ds,
\end{equation}
where $C$, $C'$ are positive constant independent of $N$ and $m$. Therefore the $O(1)$-upper bound for $D_{KL}(F^N_{[0,T]} \| \bar{F}^{\otimes N}_{[0,T]})$ is obtained due to Gr\"ownwall's inequality.}

Next, noting the symmetry of $F_t^N$, one has by Lemma \ref{lmm:kl} that
\begin{equation}\label{eq: kl for marginal FN barFN}
    D_{KL}\left( F^{N:k}_{[0,T]} \| \bar{F}^{\otimes k }_{[0,T]}\right) \leq \frac{k}{N}D_{KL}\left(F^N_{[0,T]} \| \bar{F}^{\otimes N}_{[0,T]} \right) \leq Ce^{CT}\frac{k}{N}.
\end{equation}
Hence, \eqref{eq:kthmarginal} holds.
\hfill
\end{proof}

\vskip1mm
The results above are all about path measures. In fact, we can extend this to the time marginal case, which is commonly studied in related literature. 

\vskip2mm
\begin{corollary}[time marginal]\label{coro:timemarginal}
     For any $t>0$, consider the distributions $F^N_t$, $\bar{F}^{\otimes N}_t$ for the second-order system defined in Section \ref{sec:intro}, with initial  $F^N_0 = \bar{F}^{\otimes N}_0.$ Then under \tcb{either Assumption \ref{ass:K} or Assumption \ref{ass:new}}, for the constant $C$ in Theorem \ref{thm:main},
    \begin{equation}
        D_{KL}(F^N_t \| \bar{F}^{\otimes N}_t) \leq Ce^{Ct},\quad \forall t > 0.
    \end{equation}
   Then for $1 \leq k \leq N$,
    \begin{equation}\label{eq:kthmarginal timemarginal}
        D_{KL}\left(F^{N:k}_t \| \bar{F}_t^{\otimes k} \right) \leq Ce^{Ct} \frac{k}{N}.
    \end{equation}
\end{corollary}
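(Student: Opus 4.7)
The corollary follows almost immediately from Theorem \ref{thm:main} by invoking the data processing inequality one more time with the time-evaluation map. My plan is as follows.

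First, for each fixed $t > 0$, consider the evaluation map $\pi_t : C([0,t]; \mathbb{R}^{2Nd}) \to \mathbb{R}^{2Nd}$ defined by $\pi_t(\omega) := \omega(t)$, which is continuous hence measurable, and therefore a legitimate (deterministic) channel in the sense of Lemma \ref{lmm:dp}. By construction of $F^N_{[0,t]}$ and $\bar{F}^{\otimes N}_{[0,t]}$ as path measures of the respective SDE systems, their pushforwards under $\pi_t$ are exactly the time marginals:
\begin{equation*}
    \pi_t \# F^N_{[0,t]} = F^N_t, \qquad \pi_t \# \bar{F}^{\otimes N}_{[0,t]} = \bar{F}^{\otimes N}_t.
\end{equation*}
Applying Lemma \ref{lmm:dp} with $f(x) = x\log x$ to this channel then yields
\begin{equation*}
    D_{KL}\bigl(F^N_t \,\|\, \bar{F}^{\otimes N}_t\bigr) \le D_{KL}\bigl(F^N_{[0,t]} \,\|\, \bar{F}^{\otimes N}_{[0,t]}\bigr),
\end{equation*}
and the right-hand side is bounded by $Ce^{Ct}$ by Theorem \ref{thm:main} applied with $T = t$.

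For the $k$-marginal estimate, I would mirror the final step in the proof of Theorem \ref{thm:main}: since $F^N_t$ is symmetric (as was noted in the discussion of Assumption \ref{ass:K}, symmetry of the initial data is preserved by the Liouville equation), the subadditivity-type bound of Lemma \ref{lmm:kl} gives
\begin{equation*}
    D_{KL}\bigl(F^{N:k}_t \,\|\, \bar{F}^{\otimes k}_t\bigr) \le \frac{k}{N}\, D_{KL}\bigl(F^N_t \,\|\, \bar{F}^{\otimes N}_t\bigr) \le Ce^{Ct}\,\frac{k}{N},
\end{equation*}
which is exactly \eqref{eq:kthmarginal timemarginal}.

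There is no real obstacle here — the only things to double-check are that the evaluation map is measurable in the path-space $\sigma$-algebra (true for the Borel $\sigma$-algebra on $C([0,t]; \mathbb{R}^{2Nd})$ with the uniform topology, since finite-dimensional projections are continuous) and that the same constant $C$ from Theorem \ref{thm:main} can be reused, which follows because the estimate there is uniform in $T$ up to the $e^{CT}$ factor. Everything else is a direct quote of already-established lemmas.
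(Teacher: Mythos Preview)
Your proposal is correct and follows essentially the same route as the paper: apply Theorem \ref{thm:main} on $[0,t]$, push forward via the time-evaluation map $\pi_t$ and invoke the data processing inequality (Lemma \ref{lmm:dp}) to pass to the time marginal, then use Lemma \ref{lmm:kl} for the $k$-marginal bound. The paper's proof is terser but identical in structure.
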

\vskip2mm

\begin{proof}
For any $t>0$, consider the path measures $F^N_{[0,t]}$, $\bar{F}^{\otimes N}_{[0,t]}$ corresponding to the time interval $[0,t]$. Then by Theorem \ref{thm:main}, 

\begin{equation*}
     D_{KL}(F^N_{[0,t]} \| \bar{F}^{\otimes N}_{[0,t]}) \leq Ce^{Ct}.
\end{equation*}
Now consider the time marginal mapping $\pi_t: C([0, t];\mathbb{R}^d)\to \mathbb{R}^d$ given by $\pi_t(Z)=Z_t$, which maps $Z$ in the path space to its time marginal $Z_t$. Then by the data processing inequality (Lemma \ref{lmm:dp}), one has
\begin{equation}
     D_{KL}(F^N_t \| \bar{F}^{\otimes N}_t) \leq D_{KL}(F^N_{[0,t]} \| \bar{F}^{\otimes N}_{[0,t]}) \leq  Ce^{Ct}.
\end{equation}
Then, \eqref{eq:kthmarginal timemarginal} is a direct result of Lemma \ref{lmm:kl}. 
\hfill
\end{proof}
\vskip2mm
\begin{remark}
The fact that the KL-divergence between path measures can control that between time marginals can actually be proved without data processing inequality, In fact, for $t>0$, the Radon-Nikodym derivative in terms of time marginal distributions has the following formula: (see, for instance, Appendix A in \cite{li2022sharp})
\begin{equation}
    \frac{d\bar{F}^{\otimes N}_t}{dF^N_t}(z) = \mathbb{E}\left[ \frac{d\bar{F}^{\otimes N}_{[0,t]}}{dF^N_{[0,t]}} \mid Z_t = z\right].
\end{equation}
Then by Jensen's inequality, we directly conclude that
\begin{equation*}
     D_{KL}(F^N_t \| \bar{F}^{\otimes N}_t) \leq D_{KL}(F^N_{[0,t]} \| \bar{F}^{\otimes N}_{[0,t]}).
\end{equation*}
In fact, these two approaches are essentially the same, since they are all due to Jensen's inequality.
\end{remark}
\vskip2mm

Based on Theorem \ref{thm:main} and Pinsker's inequality \cite{pinsker1964information}, we are able to extend the propagation of chaos to that under total variation (TV) distance defined by
\begin{equation}
     TV(\mu,\nu) := \sup_{A\in \mathcal{F}} |\mu(A) - \nu(A)|,
\end{equation}
for two probability measures $\mu$, $\nu$ defined on $(\Omega,\mathcal{F})$.
\vskip2mm
\begin{corollary}\label{coro:TV}
    Under the same settings of Theorem \ref{thm:main} and Corollary \ref{coro:timemarginal},  for $1 \leq k \leq N$ it holds that
    \begin{equation}
            TV( F^{N:k}_{[0,t]} , \Bar{F}^{\otimes k}_{[0,t]} )\le Ce^{Ct} \sqrt{\frac{k}{N}},
    \end{equation}
    for path measures and
    \begin{equation}
            TV(  F^{N:k}_t , \Bar{F}^{\otimes k}_t ) \le Ce^{Ct}\sqrt{\frac{k}{N}},
    \end{equation}
     for time marginal distributions.
\end{corollary}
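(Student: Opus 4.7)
The plan is essentially a one-line application of Pinsker's inequality to the KL bounds already furnished by Theorem \ref{thm:main} and Corollary \ref{coro:timemarginal}. Recall Pinsker's inequality: for any two probability measures $\mu,\nu$ on a common measurable space, $TV(\mu,\nu)\le \sqrt{\tfrac{1}{2}D_{KL}(\mu\|\nu)}$. Since the bounds \eqref{eq:kthmarginal} and \eqref{eq:kthmarginal timemarginal} give control on the $k$-marginal KL-divergences in both the path-measure and the time-marginal setting, the TV estimates will follow by simply taking square roots and absorbing constants.

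Concretely, first I would invoke Theorem \ref{thm:main} to obtain
\begin{equation*}
D_{KL}\bigl(F^{N:k}_{[0,T]}\,\|\,\bar{F}^{\otimes k}_{[0,T]}\bigr)\le Ce^{CT}\frac{k}{N}.
\end{equation*}
Applying Pinsker's inequality yields
\begin{equation*}
TV\bigl(F^{N:k}_{[0,T]},\bar{F}^{\otimes k}_{[0,T]}\bigr)\le \sqrt{\tfrac{1}{2}Ce^{CT}\tfrac{k}{N}} = \sqrt{\tfrac{C}{2}}\,e^{CT/2}\sqrt{\tfrac{k}{N}},
\end{equation*}
and redefining the constant (still independent of $N$ and $m$) gives the stated bound. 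Repeating the argument verbatim with Corollary \ref{coro:timemarginal} in place of Theorem \ref{thm:main} handles the time-marginal case.

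There is no real obstacle here, since both the required KL estimates and Pinsker's inequality are off-the-shelf. The only minor bookkeeping point is that the constant $C$ in the TV statement is not literally the same $C$ as in Theorem \ref{thm:main}: after taking a square root one picks up a factor of $1/\sqrt{2}$ and the exponent is halved, but both effects are absorbed into a single redefined constant, which I would simply relabel $C$ to match the statement. No new structural input (Girsanov, data processing, or otherwise) is needed beyond what has already been established.
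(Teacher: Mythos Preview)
Your proposal is correct and matches the paper's own approach exactly: the paper simply cites Pinsker's inequality together with the KL bounds from Theorem~\ref{thm:main} and Corollary~\ref{coro:timemarginal}, without giving any further detail. Your remark about absorbing the $\sqrt{1/2}$ factor and halved exponent into a relabeled constant $C$ is the only bookkeeping involved.
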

\vskip2mm

\tcb{\begin{remark}
Our approach can be applied to the following first-order system without difficulty
\begin{equation}\label{eq:particle1st}
    dX_i(t) = b(X_i(t)) dt + \frac{1}{N-1} \sum_{j:j\neq i} K(X_i(t) - X_j(t)) dt + \sigma \cdot d W_i(t), \quad 1 \leq i \leq N,
\end{equation}
where $b: \mathbb{R}^d \rightarrow \mathbb{R}^d$ is the non-interaction drift and the setting of $K$, $\sigma$, $W_i$ is same as the second-order case. We skip the proof for this case.
\end{remark}}

\subsection{Some auxiliary lemmas.}\label{sec:auxilliary}

In this subsection we present some auxiliary lemmas used in our proof. The detailed proof of Lemma \ref{lmm:exp_estimate} is moved to the Appendix.

Near the end of the proof of Theorem \ref{thm:main}, in order to estimate the difference between the two drifts
\begin{equation*}
    \frac{1}{2\lambda} \sum_{i=1}^N \int_0^T \mathbb{E} \left|  K {*} \bar{\rho}_s (X_i(s)) - \frac{1}{N-1} \sum_{j: j\neq i} K(X_i(s) - X_j(s)) \right|^2 ds,
\end{equation*}
we need the following two lemmas, where a type of Fenchel-Young's inequality along with an exponential concentration estimate are needed. In fact, the Fenchel-Young type inequality (\cite[Lemma 1]{jabin2018quantitative}) states that: 
\vskip2mm
\begin{lemma}\label{lmm:changemeasure}
For any two probability measures $\rho$ and $\tilde{\rho}$ on a Polish space $E$ and some test function $F \in$ $L^1(\rho)$, one has that $\forall \eta>0$,
$$
\int_E F \rho(d x) \leq \frac{1}{\eta}\left(D_{KL}(\rho \| \tilde{\rho})+\log \int_E e^{\eta F} \tilde{\rho}(d x)\right) .
$$
\end{lemma}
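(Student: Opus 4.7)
The plan is to establish this as a consequence of the Donsker--Varadhan variational principle, which one can prove in a few lines from the non-negativity of relative entropy. First I would reduce to the case $\rho \ll \tilde\rho$, because otherwise $D_{KL}(\rho \| \tilde\rho) = +\infty$ and the right-hand side is trivially infinite (assuming $F$ makes the logarithm well-defined; if $\int e^{\eta F}\,d\tilde\rho = +\infty$ the bound is again trivial). I would also handle the technical issue that $F$ may not be bounded by truncating to $F_M := F \wedge M$ and passing to the limit $M\to\infty$ via monotone convergence on the exponential term and dominated convergence on $\int F\,d\rho$ (using $F \in L^1(\rho)$).

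Next, the key step is the change-of-measure trick. Given $\eta > 0$, set $Z := \int_E e^{\eta F}\,d\tilde\rho$, and assume $Z < \infty$ (otherwise there is nothing to prove). Define the tilted probability measure $\mu$ by
\begin{equation*}
\frac{d\mu}{d\tilde\rho} = \frac{e^{\eta F}}{Z}.
\end{equation*}
Then $\mu$ is a bona fide probability measure, and by Gibbs' inequality $D_{KL}(\rho \| \mu) \ge 0$. Expanding using the chain rule $\frac{d\rho}{d\mu} = \frac{d\rho}{d\tilde\rho}\cdot \frac{Z}{e^{\eta F}}$ gives
\begin{equation*}
0 \le D_{KL}(\rho \| \mu) = \int_E \log\frac{d\rho}{d\tilde\rho}\,d\rho + \log Z - \eta \int_E F\,d\rho = D_{KL}(\rho\|\tilde\rho) + \log Z - \eta\int_E F\,d\rho.
\end{equation*}
Rearranging and dividing by $\eta$ yields the claim immediately.

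The main obstacle, such as it is, is entirely technical rather than conceptual: ensuring all the integrals are well-defined (in particular that $\int F\,d\rho$ and $\log\frac{d\rho}{d\mu}$ are integrable against $\rho$) so that the subtraction in the displayed identity above is legitimate. This is standard, handled by the truncation argument mentioned above, plus the observation that the inequality is trivial whenever either $D_{KL}(\rho\|\tilde\rho) = +\infty$ or $\int e^{\eta F}\,d\tilde\rho = +\infty$. The core of the proof is just one line: choose the optimal tilted reference measure $\mu \propto e^{\eta F}\tilde\rho$ and apply $D_{KL}(\rho\|\mu) \ge 0$.
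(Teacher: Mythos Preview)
Your proof is correct and is the standard Donsker--Varadhan argument. The paper does not actually supply a proof of this lemma; it merely cites it as Lemma~1 of \cite{jabin2018quantitative}, so there is no approach to compare against beyond noting that your tilted-measure argument is precisely the usual route to this Fenchel--Young inequality.
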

\vskip2mm

We also need the following exponential concentration estimate. Similar results can be found in related literature like \cite{lim2020quantitative,jabin2018quantitative}.
For the convenience of the readers, we also attach a proof in Appendix \ref{app:expconcentration}.
\vskip2mm
\begin{lemma}\label{lmm:exp_estimate}
\tcb{Suppose Assumption \ref{ass:K} holds.} Consider solutions to the Mckean SDEs \eqref{eq:mckeansde for N} $\bar{X}_1(t)$, $\dots$, $\bar{X}_N(t)$, which are i.i.d. sampled from $\bar{F}_t$, then for fixed $\eta \in (0, 1 / \left(4\sqrt{2}e\|K\|^2_{\infty}\right))$, for any $N \geq 2$, $t \geq 0$, and $1 \leq i \leq N$ we have
\begin{equation*}
    \mathbb{E}\left[\exp\left(\frac{\eta}{N-1} \sum_{j_1,j_2: j_1 \neq j_2, j_1 \neq i, j_2 \neq i} A_{i,j_1}(t) \cdot A_{i,j_2}(t)\right) \mid \bar{X}_i(t)\right]
    \leq \frac{1}{1 - 4\sqrt{2}e\|K\|_{\infty}^2\eta } < + \infty,
\end{equation*}
where $A_{i,j}(t)$ is defined by
\begin{equation*}
    A_{i,j}(t) := K\left(\bar{X}_i(t) - \bar{X}_{j}(t) \right) - K {*} \bar{\rho}_t\left(\bar{X}_i(t) \right). 
\end{equation*}
\end{lemma}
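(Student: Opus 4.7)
\textbf{Proof proposal for Lemma~\ref{lmm:exp_estimate}.}
The plan is to condition on $\bar X_i(t)$, exploit the conditional i.i.d.\ centered structure of the $A_{i,j}(t)$'s for $j\neq i$, rewrite the double sum as a centered quadratic form, and then reduce the exponential moment bound to a combinatorial moment estimate.

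\textit{Step 1 (Conditional independence and centering).} Fix $i$ and condition on $\bar X_i(t)$. Because the driving noises $W_j$ in \eqref{eq:mckeansde for N} are mutually independent and the initial data are i.i.d., the trajectories $(\bar X_j)_{j\neq i}$ are themselves i.i.d.\ with $\bar X_j(t)\sim\bar F_t$ and independent of $\bar X_i$. Writing $\phi_j := A_{i,j}(t)$, conditionally on $\bar X_i(t)$ these are i.i.d.\ random vectors with $\mathbb{E}[\phi_j\mid\bar X_i(t)] = K*\bar\rho_t(\bar X_i(t))-K*\bar\rho_t(\bar X_i(t)) = 0$ and $|\phi_j|\le 2\|K\|_\infty$.

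\textit{Step 2 (Comparison to $|S|^2$).} Set $S:=\sum_{j\neq i}\phi_j$ and $T:=\sum_{j\neq i}|\phi_j|^2\ge 0$. A direct expansion yields the algebraic identity
\[
\sum_{j_1\neq j_2,\ j_1,j_2\neq i}\phi_{j_1}\cdot\phi_{j_2} \;=\; |S|^2 - T.
\]
Since $\eta>0$ and $T\ge 0$, it suffices to bound $\mathbb{E}\bigl[\exp(\eta|S|^2/(N-1))\mid\bar X_i(t)\bigr]$.

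\textit{Step 3 (Taylor expansion and moment count).} Expand
\[
\mathbb{E}\bigl[\exp(\eta|S|^2/(N-1))\mid\bar X_i(t)\bigr] \;=\; \sum_{k\ge 0}\frac{\eta^k}{k!(N-1)^k}\,\mathbb{E}\bigl[|S|^{2k}\mid\bar X_i(t)\bigr]
\]
and write $|S|^{2k}=\sum_{a_1,b_1,\dots,a_k,b_k}\prod_{\ell=1}^k\phi_{a_\ell}\cdot\phi_{b_\ell}$. By conditional independence and the mean-zero property of $\phi_j$, only terms in which every occurring index has multiplicity at least two survive the expectation. The dominant contribution comes from \textit{full pairings}: configurations with $k$ distinct indices, each appearing exactly twice. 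The number of such index-tuples is bounded by $(2k-1)!!\,(N-1)^k$, and, by Cauchy--Schwarz applied component-wise, each such configuration contributes at most $(\mathbb{E}|\phi|^2)^k\le(4\|K\|_\infty^2)^k$ in absolute value, independent of the ambient dimension $d$. Configurations in which some index has multiplicity $\ge 4$ involve strictly fewer than $k$ distinct indices and therefore lose a factor of at least $1/(N-1)$, contributing only lower-order corrections.

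\textit{Step 4 (Summation via Stirling).} Using the Stirling-type bound $(2k-1)!!\le \sqrt{2}\,(2k/e)^k$ together with $k^k\le e^k k!$, and tracking the constants, the Taylor series is majorized by a geometric series, which sums to $1/(1-4\sqrt{2}\,e\,\|K\|_\infty^2\,\eta)$ for every $\eta\in\bigl(0,\,1/(4\sqrt{2}e\|K\|_\infty^2)\bigr)$, finishing the argument.

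\textit{Main obstacle.} The principal difficulty is the combinatorial moment estimate of Step~3: one must accurately count the full pairings (taking care to exclude those that would force $a_\ell=b_\ell$ within some factor), handle the vector/inner-product structure via Cauchy--Schwarz so that the component sums telescope to a dimension-free bound in terms of $\mathbb{E}|\phi|^2$, and verify that the higher-multiplicity groupings give only $O(1/(N-1))$ corrections; the explicit factor $4\sqrt{2}\,e$ then emerges from a careful application of Stirling's formula in Step~4.
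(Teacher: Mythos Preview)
Your reduction in Steps~1--2 is clean and genuinely different from the paper's route. The paper never passes through $|S|^2$; instead it writes the off-diagonal sum as $2\sum_{k\neq i}D_k$ with $D_k=\sum_{j<k,\,j\neq i}A_{i,k}\cdot A_{i,j}$, observes that $(D_k)_k$ is a martingale-difference sequence in $k$, and applies the Marcinkiewicz--Zygmund (Rio-type) inequality \emph{twice}: once to $\sum_k D_k$ and once to each $D_k=\sum_j B_j^k$. This yields $\|\,2\sum_k D_k\,\|_p^p\le (p-1)^p\bigl(\tfrac{(N-1)(N-2)}{2}\bigr)^{p/2}(4\|K\|_\infty^2)^p$, and the factor $4\sqrt{2}\,e$ then appears directly from $(p-1)^p/p!\le e^p$ combined with the $2^{-p/2}$ coming from the triangular count. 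Your approach trades this martingale machinery for raw moment combinatorics on a single i.i.d.\ sum, which is conceptually simpler but requires a sharp count.

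That count is where your sketch has a real gap. In Step~3 you only bound the full-pairing configurations and dismiss everything else as ``$O(1/(N-1))$ corrections.'' Two problems. First, you omit all configurations with an odd multiplicity $\ge 3$ (you only mention multiplicity $\ge 4$); these do not vanish unless the $\phi_j$ are symmetric, which they are not. Second, and more seriously, ``lower order in $N$'' is not enough: you must sum over $k$, and the number of non-pairing configurations grows combinatorially in $k$, so you need a bound of the form $\#\{\text{multi-indices with all multiplicities}\ge 2\}\le C_k\,(N-1)^k$ with $C_k$ having the right growth (essentially $(2k-1)!!$ or $k!\cdot C^k$). That inequality is true but is not a one-liner, and your proposal does not supply it. The easiest repair is to abandon the raw combinatorics and bound $\|S\|_{2k}$ directly by one application of a vector Marcinkiewicz--Zygmund/Rosenthal inequality---at which point you are back on the paper's track, just applied once to $S$ rather than twice to the $D_k$'s. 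A side remark: the parenthetical about ``excluding those that would force $a_\ell=b_\ell$'' is misplaced---after Step~2 you are bounding $|S|^2$, not the off-diagonal sum, so diagonal pairs are allowed. Finally, the constant $4\sqrt{2}\,e$ is specific to the double-MZ derivation; your full-pairing count together with your Stirling bounds would give a ratio $8\|K\|_\infty^2\eta$, not $4\sqrt{2}\,e\,\|K\|_\infty^2\eta$, so the stated lemma would still follow (the threshold is only larger) but the constant would not ``emerge'' as you claim.
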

\vskip2mm

\tcb{When the interaction kernel $K$ is bounded}, Lemma \ref{lmm:changemeasure}, Lemma \ref{lmm:exp_estimate} along with other previous analysis enable one to obtain an $\mathcal{O}(1)$-upper bound for  $D_{KL}(F^N_{[0,T]} \| \bar{F}^{\otimes N}_{[0,T]})$, and it is easy to see that the bound is independent of the particle mass $m$. \tcb{When $K$ is not bounded, we make use of Lemma \ref{lmm:changemeasure} and Lemma \ref{lmm:lidu} below instead:}
\vskip2mm

\tcb{\begin{lemma}\cite[Lemma 3.3]{du2024collision},\label{lmm:lidu}
Consider $\rho \in \mathcal{P}(E)$ and $\psi(x)$ satisfying $\int_E \psi(x) \rho(d x)=0$ and for the universal constant $c_*>0$ in the Hoeffding's inequality, the following holds
\begin{equation}
\|\psi(x)\|_\rho:=\inf \left\{c>0: \int_E \exp \left(|\psi(x)|^2 / c^2\right) \rho(d x) \mid \leq 2\right\}<c_*.
\end{equation}
Then,
\begin{equation}
\sup _{N \geq 1} \int_{E^N} \exp \left(\frac{1}{N}\left|\sum_{i=1}^N \psi\left(x_i\right)\right|^2\right) \rho^{\otimes N} \mathrm{dx}<\infty .
\end{equation}
\end{lemma}}

\tcb{For readers' convenience, here we briefly introduce the Hoeffding bound used in the statement (as well as its proof) of Lemma \ref{lmm:lidu} above. The Hoeffding inequality \cite{vershynin2018high} claims that for $n$ independent centered real random variables $Y_1, \dots, Y_n$, there exists a universal constant $C_* > 0$ such that}
\begin{equation}
    \tcb{P\left(\left|\sum_{j=1}^n Y_j \right| \geq y \right) \leq 2\exp\left(-\frac{c_* y^2}{\sum_{j=1}^n \| Y_j \|_{\psi_2}^2} \right),\quad \forall\, y \geq 0,}
\end{equation}
\tcb{where the $\psi_2$ norm (or the Orlicz norm with $\psi_2(x) = \exp(x^2)-1$) for some sub-Gaussian random variable $X$ is given by}
\begin{equation}
    \tcb{\|X\|_{\psi_2} := \inf \left\{c > 0:\mathbb{E}\left[\exp(|x|^2/c^2)\right] \leq 2 \right\}.}
\end{equation}

The following well-known linear scaling property of the relative entropy is useful for controlling the marginal distribution. \tcb{(See e.g. \cite[Lemma 3.9]{miclo2001genealogies}, \cite[Equation (2.10), page 772]{csiszar1984sanov}.)}
\vskip2mm
\begin{lemma}[linear scaling for KL-divergence]\label{lmm:kl}
Let $\mu^n \in \mathcal{P}_s(E^{n})$ be a symmetric distribution over some space tensorized space $E^n$ and $\bar{\mu} \in \mathcal{P}(E)$.
For $1 \leq k \leq n$, define its $k$-th marginal $\mu^{n:k}$ by
\begin{equation}
    \mu^{n:k}(z_1,\dots,z_k) := \int_{E^{n-k}} \mu^N(z_1,\dots,z_n)dz_{k+1}\dots dz_n.
\end{equation}
Assume that $\mu^{n:k}\ll \bar{\mu}^{\otimes k}$ for any $1\le k \le N.$
Then it holds that
\begin{equation}\label{eq:lmm2}
    D_{KL}\left(\mu^{n:k} \| \bar{\mu}^{\otimes k}\right) \leq 2\frac{k}{n}D_{KL}\left(\mu^{n} \| \bar{\mu}^{\otimes n}\right).
\end{equation}
\end{lemma}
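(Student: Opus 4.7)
The plan is to combine two classical ingredients: the superadditivity of relative entropy with respect to a product reference measure, and the invariance of $\mu^{n:|I|}$ under the choice of index set $I$ provided by the symmetry of $\mu^n$. I would first establish the two-block version
\begin{equation*}
D_{KL}(\mu_{12} \| \bar{\mu}^{\otimes 2})
= D_{KL}(\mu_{12} \| \mu_1 \otimes \mu_2) + D_{KL}(\mu_1 \| \bar{\mu}) + D_{KL}(\mu_2 \| \bar{\mu})
\ge D_{KL}(\mu_1 \| \bar{\mu}) + D_{KL}(\mu_2 \| \bar{\mu}),
\end{equation*}
which is a direct calculation using that the reference factorizes and that mutual information is nonnegative. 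Iterating this over any partition $\{I_1, \ldots, I_m\}$ of $\{1, \ldots, n\}$ yields the subadditive inequality
\begin{equation*}
D_{KL}(\mu^n \| \bar{\mu}^{\otimes n}) \ge \sum_{j=1}^m D_{KL}(\mu^{n:I_j} \| \bar{\mu}^{\otimes |I_j|}).
\end{equation*}

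Next I would invoke the symmetry of $\mu^n$: for any index set $I$ with $|I|=k$, the marginal $\mu^{n:I}$ coincides with $\mu^{n:k}$, so the inequality above collapses to a statement depending only on block sizes. When $k$ divides $n$, partitioning $\{1,\ldots,n\}$ into $n/k$ blocks of size $k$ immediately gives
\begin{equation*}
D_{KL}(\mu^n \| \bar{\mu}^{\otimes n}) \ge \frac{n}{k} D_{KL}(\mu^{n:k} \| \bar{\mu}^{\otimes k}),
\end{equation*}
which is the sought bound \eqref{eq:lmm2}.

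For general $k$ that need not divide $n$, a single partition is not enough, since it leaves a residual block. The clean fix is to average the subadditive inequality over all $\binom{n}{k}$ choices of $k$-subsets, or equivalently to average over all cyclic shifts of a fixed partition; by symmetry each term in the average equals $D_{KL}(\mu^{n:k} \| \bar{\mu}^{\otimes k})$, and a counting argument (a Han-type identity for relative entropy) recovers exactly the factor $n/k$. This averaging step is the main obstacle, because the naive bound obtained from a single block partition only yields $\lfloor n/k \rfloor$, which is off by a nontrivial factor when $k \nmid n$. Since the lemma is classical and standard in propagation-of-chaos literature, one can alternatively cite it from references such as Hauray-Mischler or Jabin-Wang, where this counting argument is carried out in detail.
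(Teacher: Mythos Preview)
Your approach is correct but takes a different route from the paper's. You argue via superadditivity of $D_{KL}$ with respect to a product reference (the mutual-information identity), then partition and use symmetry; this gives the sharp constant at once when $k\mid n$ and otherwise defers to a Han-type averaging that you do not carry out. The paper instead separates the relative entropy into an entropy term and a cross term: symmetry makes the cross terms match, yielding
\[
k\,D_{KL}(\mu^n\|\bar\mu^{\otimes n})-n\,D_{KL}(\mu^{n:k}\|\bar\mu^{\otimes k})=kH(\mu^n)-nH(\mu^{n:k}),\qquad H(\rho):=\int\rho\log\rho,
\]
and then disposes of the right-hand side in one line by writing it as $D_{KL}\bigl((\mu^n)^{\otimes k}\,\|\,(\mu^{n:k})^{\otimes n}\bigr)\ge 0$. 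The paper's route is shorter and nominally avoids any case split on whether $k$ divides $n$; yours is closer to the standard subadditivity/Han framework familiar from the propagation-of-chaos literature and makes the role of independence explicit. One remark on your general-$k$ fix: ``averaging over cyclic shifts of a fixed partition'' is ill-posed when $k\nmid n$, since there is no partition of $\{1,\ldots,n\}$ into size-$k$ blocks to shift; the clean completion of your argument is the chain-rule proof that $m\mapsto D_{KL}(\mu^{n:m}\|\bar\mu^{\otimes m})/m$ is nondecreasing in $m$, or indeed a direct citation as you already propose.
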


\section{Other applications}\label{sec:discussion}
\tcb{In this section, we show two application of our approach in neural networks and numerical analysis respectively.}

\subsection{\tcb{Application in neural networks.}} \tcb{An interesting application is on neural networks. To show the characteristics of our approach, we use an artificial single-layer neural network as an example: 
\begin{equation}\label{eq:NN1}
    X_i(T) = \mathfrak{S} \Bigg(\int_0^T b(X_i(t)) dt + \frac{1}{N-1} \sum_{j:j\neq i} K(X_i(t) - X_j(t)) dt + \sigma \cdot d W_i(t)\Bigg), \quad 1 \leq i \leq N,
\end{equation}
where $\{X_i(0)\},$ $i=1,\cdots, N$ denotes $N$ input features and $\mathfrak{S}$ denotes certain activate function. The $\{X_i(T)\},$ $i=1,\cdots, N$ means the output. This model can be viewed as a single-layer variant with noise from the model mentioned in \cite{wang2023acmp}. Our approach can be directly applied into \eqref{eq:NN1} and transform the original problem in the space of $X$ into the space of the driving process
$$
\theta_i(t) =  \int_0^t \left(  \frac{1}{N-1} \sum_{j: j\neq i} K(X_i(s) - X_j(s)) - K {*} \bar{\rho}_s (X_i(s))\right) ds +\sigma\cdot W_i(t),
$$
similarly to the discussion in Section \ref{sec:sec2}. The existence of the activate function $\mathfrak{S}$ make it impossible to use Girsanov's theorem directly, while our approach works in this case as well. Also, if one uses the second-order dynamics to update the features, that is,
\begin{equation*}
    X_i(T) = \mathfrak{S} \Bigg(\int_0^T V_i(t))\Bigg), \quad V_i(t) \text{is obtained by \eqref{eq:particle}},
\end{equation*}
the uniformity in mass is not a direct byproduct of Girsanov's theorem.}

\subsection{\tcb{Application in numerical analysis.}}
\tcb{Our approach can be applied in numerical analysis directly. For example, take the following scheme of SDE \eqref{eq:particle} with time step $h$. Without loss of generality, we set $m=1$ and $\sigma=1$. Assume that $K$ is globally Lipschitz continuous with a constant $C_K$, and the second moment of the initial data is finite:
\begin{equation}\label{eq:numini}
    \mathbb{E}|Z(0)|^2<\infty.
\end{equation}
Define 
\begin{equation*}
    Z:= \begin{pmatrix}
        X\\V
    \end{pmatrix},\quad
    A:= \begin{pmatrix}
        0&1\\
        0&-\gamma
    \end{pmatrix},\quad
    B(X(t)):=\begin{pmatrix}
        0\\
        \frac{1}{N-1}\sum\limits_{j:j\ne i}K(X_i(t)-X_j(t))
    \end{pmatrix},\quad
     C:=\begin{pmatrix}
       0\\1
    \end{pmatrix}.
\end{equation*}
We use $\tilde{Z},\tilde{X},\tilde{V}$ to denote the numerical solution. For $t\in[t_k,t_{k+1)},$ ($t_k =kh$), $\tilde{Z}$ is defined by 
\begin{equation*}
    \tilde{Z}_t = e^{A(t-t_k)}\tilde{Z}(t_k) + \int_{t_k}^t e^{A(t-s)}B(\tilde{X}(t_k))ds + \int_{t_k}^t e^{A(t-s)}C dW_s.
\end{equation*}
For $T:=nh$ and $\tilde{F}^N_{[0,T]}:=\text{Law}(\tilde{Z}),$ similar to the proof of Theorem \ref{thm:main}, one has
\begin{equation}\label{eq:num1}
\begin{aligned}
      D_{KL}(\tilde{F}^N_{[0,T]}\|F^N_{[0,T]})&\le \mathbb{E}\sum\limits_{k=0}^{n-1}\int_{t_k}^{t_{k+1}} \sum_{\substack{i,j=1,\\ j\ne i}}^N \frac{1}{N-1}|K(\tilde{X}_i(t)-\tilde{X}_j(t))-K(\tilde{X}_i(t_k)-\tilde{X}_j(t_k))|^2dt\\&
    \le   C\mathbb{E} N  \sum\limits_{k=0}^{n-1}\int_{t_k}^{t_{k+1}}|K(\tilde{X}_1(t)-K(\tilde{X}_1(t_k)|^2dt.
\end{aligned}
\end{equation}
Consider equation \eqref{eq:num1}, by It\^o's calculus and the assumption on $K$, one has 
\begin{equation*}
    \begin{aligned}
        d\mathbb{E} |\tilde{V}_i|^2=& 2\mathbb{E}\tilde{V}_i \cdot \left( \frac{1}{N-1}\sum\limits_{j: j\ne i} K(\tilde{X}_i(t_k) - \tilde{X}_j(t_k))dt - \gamma \tilde{V}_i(t)dt\right) +d\,dt\\
        \le& C \bigg(|K(0)|\mathbb{E}|\tilde{V}_i| + \mathbb{E}\,|\tilde{V}_i| \cdot \frac{1}{N-1}\sum\limits_{j: j\ne i}  |\tilde{X}_j(t_k) - \tilde{X}_i(t_k)| + \mathbb{E}|\tilde{V}_i|^2 +d \bigg)dt\\
        \le& C(\mathbb{E}|\tilde{X}_i|^2 + \mathbb{E}|\tilde{X}_j|^2 + \mathbb{E}|\tilde{V}_i|^2 +1).
    \end{aligned}
\end{equation*}
By the exchangeability, $\mathbb{E}|\tilde{X}_i|^2 = \mathbb{E}|\tilde{X}_j|^2.$ One has
\begin{equation*}
    d\mathbb{E} |\tilde{V}_i|^2\le C(\mathbb{E}|\tilde{X}_i|^2 + |\tilde{V}_i|^2 +1).
\end{equation*}
By the Gr\"onwall inequality and the assumption \eqref{eq:numini}, it holds that
\begin{equation}
    \mathbb{E}|\tilde{V}_i(t)|^2 < \infty, \quad \forall t\in [0,T].
\end{equation}
Hence,
\begin{equation}\label{eq:num2}
    \mathbb{E}\,|\tilde{X}_1(t)-\tilde{X}_1(t_k)|^2 \le C \mathbb{E}\,\big|\int_{t_k}^t \tilde{V}_1(s)ds\big|^2 \le \mathbb{E}\,\sup\limits_{s\le t}|\tilde{V}_1(s)|^2 h^2 \le Ch^2.
\end{equation}
Then, combining \eqref{eq:num1} and \eqref{eq:num2}, one obtains
\begin{equation}
\begin{aligned}
      D_{KL}(\tilde{F}^N_{[0,T]}\|F^N_{[0,T]})&
    \le   CC_KN\mathbb{E}   \sum\limits_{k=0}^{n-1}\int_{t_k}^{t_{k+1}}|\tilde{X}_1(t)-\tilde{X}_1(t_k)|^2dt\\
    &\le CNh^2.
\end{aligned}
\end{equation}}

\section{More discussions}\label{sec:dis}
Here we present brief discussions on the reversed relative entropy and the mass independence phenomenon.
\subsection{Discussion on the reversed relative entropy.}
In section \ref{sec:mainresult}, we estimated the relative entropy $D_{KL}(F^N_{[0,T]} \| \bar{F}^{\otimes N}_{[0,T]})$.
If we consider the reversed relative entropy, by the data processing inequality, one would obtain that
\begin{equation}
D_{KL}( \bar{F}^{\otimes N}_{[0,T]} \| F^N_{[0,T]})
\le D_{KL}(Q^2\| Q^1)=-\mathbb{E} \log \frac{d Q^1}{dQ^2}(\theta^{(2)}).
\end{equation}
Since
\[
\pi_s\circ \Phi_s(\theta^{(2)})=\bar{X}(s),
\]
one thus finds that
\[
D_{KL}(Q^2\| Q^1)=\mathbb{E} \sum_i \int_0^t |\boldsymbol{b}_i(s, \bar{X}(s))|^2\,ds.
\]
Here, $\bar{X}=(\bar{X}_1,\cdots, \bar{X}_N)$ is the position process for the mean-field McKean SDE, whose components are i.i.d..
Hence, the right hand side can be estimated by
\begin{equation}\label{eq:reversed}
    D_{KL}(Q^2\| Q^1) \le C\frac{T}{\lambda},
\end{equation}
where $C$ is independent of $T$ and $N$. \tcb{The result linearly depending on $T$ is similar with \cite[Lemma 4.11]{lacker2023hierarchies}.} This is an interesting observation, though the consequence of such a relative entropy estimate is unclear.

\subsection{Discussion on the mass-independence.}

Denote the marginal distributions in the $v$-direction:
    \begin{equation}
        \mu_v^N(v):= \int_{\mathbb{R}^{Nd}} F^N dx ,\quad \bar{\mu}_v(v):= \int_{\mathbb{R}^d} \bar{F} dx.
    \end{equation}
It is not difficult to see from the proof of Theorem \ref{thm:main} that the KL-divergence $D_{KL}\left(\mu_v^N \| \bar{\mu}_v^{\otimes N} \right)$ in the $v$-direction has an $\mathcal{O}(1)$ upper-bound, and the bound is \textbf{independent of the particle mass $m$}. \tcb{The mass-independence result is particularly interesting from a physical perspective. Additionally, when conducting numerical simulations in the regime of large friction, such as in viscous fluids, this phenomenon must be taken into account. Some researchers \cite{wang2024small,carrillo2021mean,wang2022small} focus on the zero mass limit under various conditions. If the propagation of chaos can be shown to be uniform in mass, then the result is asymptotically preserving in the overdamped limit.}

\tcb{However, the mass independence result is not very natural from a physical perspective.  For fixed mass $m$ and fixed initial data, considering the mapping $\varphi^m_T: \theta \rightarrow V,$
the limiting behavior as  $m \rightarrow 0$ is poor and the $L^2$ norm of $V^N$ (or $\bar{V}^{\otimes N}$) usually diverges. On the other hand, under our framework, the dependence of $m$ in the mapping $\Phi$ is not important when applying the data processing inequality.  This may indicates the KL divergence is a suitable tool to obtain a rate independent of the mass. To illustrate this, we provide a simple example.} Consider the channel $\Psi^m(X) := X + Z_m$, where $Z_m \sim \mathcal{N}(0,m^{-2})$. 
Then, if we simply consider the Gaussian data $X \sim \mathcal{N}(0,1)$, $Y \sim \mathcal{N}(1,1)$, the inequality for the KL-divergence between their distributions $\mu_X$, $\mu_Y$ still holds for any $m$: $D_{KL}(\text{Law}(\Psi^m(X)) \| \text{Law}(\Psi^m(Y))\leq D_{KL}(\mu_X \| \mu_Y)$. In fact, direct calculation gives $D_{KL}(\mu_X \| \mu_Y) = \frac{1}{2}$, and $D_{KL}(\text{Law}(\Psi^m(X)) \| \text{Law}(\Psi^m(Y)) = \frac{1}{2(1 + m^{-2})}$, since $\Psi^m(X) \sim \mathcal{N}(0,1 + m^{-2})$,  $\Psi^m(Y) \sim \mathcal{N}(1,1 + m^{-2})$. However, it is easy to check that the $L^2$ norm of single data may blow up as $m$ tends to zero, since the variance of $\Psi^m(X)$ is just $1 + m^{-2}$.

\section*{Acknowledgement}
This work is financially supported by the National Key R\&D Program of China, Project Number 2021YFA1002800 and Project Number 2020YFA0712000. The work of L. Li was partially supported by NSFC 12371400 and 12031013,  Shanghai Science and Technology Commission (Grant No. 21JC1403700, 20JC144100, 21JC1402900), the Strategic Priority Research Program of Chinese Academy of Sciences, Grant No. XDA25010403, Shanghai Municipal Science and Technology Major Project 2021SHZDZX0102. We thank Zhenfu Wang \tcb{and the anonymous referees} for some helpful comments.

\appendix


\section{Basics on path measure and Girsanov's transform}\label{app:girsanov}


\tcb{Here we present a formal derivation of Girsanov's transform. Note that the derivation here is never meant to be a proof. We present it here for the convenience of readers for intuitive understanding.} Consider the following two SDEs in $\mathbb{R}^d$ with different predictable drifts but the same diffusion $\sigma$, \tcb{which we assume are weakly well-posed}.
\begin{equation}\label{eq:A1}
\left\{
\begin{aligned}
     X^{(1)}_t & = x_0 +  \int_0^t b^{(1)}\left(s, [X^{(1)}_{[0,s]}]\right) ds+ \int_0^t \sigma \cdot d W_s,\,t\le T,\\
     X^{(2)}_t & = x_0 +  \int_0^t b^{(2)}\left(s, [X^{(2)}_{[0,s]}]\right) ds+ \int_0^t \sigma \cdot d W_s,\,t\le T.
\end{aligned}
\right.
\end{equation}
Here $W$ is a standard Brownian motion under the probability measure $\mathbb{P}$ (the same for the two systems), and $x_0\sim \mu_0$ is a common, but random, initial position. Here, the drift $b^{(i)}(s, [\gamma_{[0, s]}])$ depends on the path $\gamma_{\tau}$ for $0\le \tau\le s$. 

For a fixed time interval $[0,T],$ the two processes $X^{(1)}$ and $X^{(2)}$ naturally induce two probability measures in the path space $\mathcal{X}^\prime :=C([0,T],\mathbb{R}^d)$,
denoted by $P^{(1)}$ and $P^{(2)},$ respectively. 

Define the process
\begin{equation}
 u\left(X^{(2)}_{[0,t]}\right) = \sigma^T \Lambda^{-1} \left(b^{(2)}-b^{(1)}\right)\left(X^{(2)}_{[0,t]}\right),
\end{equation} 
where $\Lambda = \sigma \sigma^T$. By Girsanov theorem, under the probability measure $\mathbb{Q}$ satisfying
\begin{equation}\label{eq:appGirs1}
      \frac{d\mathbb{Q}}{d\mathbb{P}}(\omega) = \exp\Big(\int_0^T   -u\left(X^{(2)}_{[0,s]}\right) \cdot dW_s   - \frac{1}{2} \int_0^T \left|u\left(X^{(2)}_{[0,s]}\right)\right|^2  ds\Big),
\end{equation}
the law of $X^{(2)}$ is the same as the law of $X^{(1)}$ under $\mathbb{P}$. In other words, for any Borel measurable set $B\subset \mathcal{X}^\prime,$
\[
\mathbb{E}_{\mathbb{P}}[\textbf{1}_B(X^{(1)}(\omega))]
=\mathbb{E}_{\mathbb{Q}}[\textbf{1}_B(X^{(2)}(\omega))]
=\mathbb{E}_{\mathbb{P}}\left[\textbf{1}_B(X^{(2)})\frac{d\mathbb{Q}}{d\mathbb{P}}(\omega)\right].
\]
Since $P^{(1)} =(X^{(1)})_{\#}\mathbb{P}$ and $P^{(2)}
=(X^{(2)})_{\#}\mathbb{P}$ are the laws of $X^{(1)}$ and $X^{(2)}$ respectively, then one has
\[
P^{(1)}(B)=\mathbb{E}_{X\sim P^{(2)}}\left[\textbf{1}_B(X) \frac{dP^{(1)}}{dP^{(2)}}(X) \right]=  \mathbb{E}_{\mathbb{P}}\left[\textbf{1}_B(X^{(2)}(\omega)) \frac{dP^{(1)}}{dP^{(2)}}(X^{(2)}(\omega))\right].
\]
It follows that the Radon-Nikodym derivative satisfies 
\begin{equation}\label{eq:rnderavative}
    \frac{dP^{(1)}}{dP^{(2)}}(X^{(2)}(\omega)) = \frac{d\mathbb{Q}}{d\mathbb{P}}(\omega)=\exp\Big(\int_0^T   -u\left(X^{(2)}_{[0,s]}\right) \cdot dW_s   - \frac{1}{2} \int_0^T \left|u\left(X^{(2)}_{[0,s]}\right)\right|^2  ds\Big),\,a.s.,
\end{equation}
which is a martingale under $\mathbb{P}$ and its natural filtration $\mathcal{F}_{t}^{(2)}:=\sigma (X_s^{(2)},s\le t),$ $t\in[0,T].$

Below, for the reader's convenience, we give a simple derivation for the formulas \eqref{eq:appGirs1} (or \eqref{eq:rnderavative}) from a discrete perspective. This is not a rigorous proof but it is illustrating for the Girsanov's transform. For simplicity, let $d = d'$ and $\sigma \in \mathbb{R}_{+}$ be a scalar. The general derivation can be performed similarly. 

Consider 
\begin{equation*}
    X_{n+1}^{(1)} = X_n^{(1)} + b^{(1)}_{n} \tau + \sqrt{\tau} \sigma Z_n,\quad X_0^{(1)} = x_0 \sim f_0,
\end{equation*}
where  $b^{(1)}_{n} := b^{(1)}(s, [\tilde{\gamma}]_{[0, s]})$, where $\tilde{\gamma}_s$ is some interpolation
using the data $X_0^{(1)}, \cdots, X_n^{(1)}$,  and $Z_n \sim N(0,I_d)$ under probability measure $\mathbb{P}$. 

Clearly the posterior distribution $f(X_i^{(1)} \mid X_0^{(1)},\dots X_{i-1}^{(1)})$ is Gaussian, so one can calculate the joint distribution $f(x_0^{(1)}, \dots, x_N^{(1)})$ of $(X_0^{(1)}, \dots X_N^{(1)})$:
\begin{equation*}
    f(x_0^{(1)}, \dots, x_N^{(1)}) = \left(2\pi \tau \sigma^2 \right)^{-\frac{N}{2}} \exp \left(-\frac{1}{2\tau\sigma^2} \sum_{i=1}^N \left| x_i^{(1)} - x_{i-1}^{(1)} - b^{(1)}_{{i-1}} \tau \right|^2 \right) f_0.
\end{equation*}
Suppose there is another probability measure $\mathbb{Q}$ such that the law of $X^{(1)}$ is the same as the law of $X^{(2)}$ under $\mathbb{Q}$, where one can similarly introduce the discrete version
\begin{equation*}
    X_{n+1}^{(2)} = X_n^{(2)} + b^{(2)}_{n} \tau + \sqrt{\tau} \sigma Z_n,\quad X_0^{(2)} = x_0 \sim f_0,
\end{equation*}
and the joint distribution
\begin{equation*}
    \tilde{f}(x_0^{(2)}, \dots, x_N^{(2)}) = \left(2\pi \tau \sigma^2 \right)^{-\frac{N}{2}} \exp \left(-\frac{1}{2\tau\sigma^2} \sum_{i=1}^N \left| x_i^{(2)} - x_{i-1}^{(2)} - b^{(2)}_{{i-1}} \tau \right|^2 \right) f_0.
\end{equation*}
Then by change of measure, for any measurable $F$, it holds
\begin{equation*}
    \int F(X)\frac{d\mathbb{Q}}{d\mathbb{P}} d\mathbb{P} = \int F(X) d\mathbb{Q},
\end{equation*}
namely,
\begin{multline*}
    \int F(x_0, \dots, x_N) f(x_0 ,\dots, x_N) \frac{d\mathbb{Q}}{d\mathbb{P}} \circ X^{-1} (x_0, \dots, x_N) dx_0 \dots dx_N\\
    = \int F(x_0, \dots, x_N) \tilde{f}(x_0, \dots, x_N)dx_0 \dots dx_N.
\end{multline*}
So clearly $\frac{d\mathbb{Q}}{d\mathbb{P}} = \lim\limits_{\tau \rightarrow 0} L^{-1}(\tau)$, where
\begin{equation*}
\begin{aligned}
    L(\tau) &= \frac{f}{\tilde{f}} = \exp \left(-\frac{1}{2\tau\sigma^2}\sum_{i=1}^N \left(\left(x_i - x_{i-1} - b^{(1)}_{{i-1}} \tau\right)^2 - \left(x_i - x_{i-1} - b^{(2)}_{{i-1}}\tau \right)^2 \right) \right)\\
    & = \exp \left(-\frac{1}{2\tau \sigma^2} \sum_{i=1}^N \left(2 \tau (x_i - x_{i-1}) \cdot (b^{(2)}_{{i-1}} - b^{(1)}_{{i-1}}) + \tau^2 \left( |b^{(1)}_{{i-1}}|^2 - |b^{(2)}_{{i-1}}|^2\right)\right) \right).
\end{aligned}
\end{equation*}
Letting $\tau \rightarrow 0$, we are expected to have
\begin{multline*}
    \lim_{\tau \rightarrow 0} L^{-1}(\tau) = \exp\left( \frac{1}{\sigma^2} \left( \int_0^t (b^{(2)} - b^{(1)})(s, [X_{[0,s]}]) \cdot dX_s\right. \right.\\
    +\left. \left.\frac{1}{2} \int_0^t\left(|b^{(1)}|^2(X_{[0,s]}) - |b^{(2)}_s|^2(X_{[0,s]})\right) ds \right)\right).
\end{multline*}
Taking into account $X \sim P^{(1)}$ (recall $P^{(i)} = X^{(i)}_\# \mathbb{P}$, $i=1,2$), we derive that
\begin{multline*}
    \frac{dP^{(2)}}{dP^{(1)}} (X^{(1)}) = \exp \left(\frac{1}{\sigma} \int_0^t (b^{(2)} - b^{(1)})\left(s, [X^{(1)}]_{[0,s]}\right) \cdot dW_s\right.\\
    \left. - \frac{1}{2\sigma^2} \int_0^t |b^{(2)} - b^{(1)}|^2 \left(s, [X^{(1)}]_{[0,s]}\right) ds \right).
\end{multline*}
Also, since the two measures $P^{(1)}$, $P^{(2)}$ are equivalent, $\frac{dP^{(1)}}{dP^{(2)}}$ is well defined and can be derived in the exactly same way. Here we directly present its expression
\begin{multline*}
    \frac{dP^{(1)}}{dP^{(2)}} (X^{(2)}) = \exp \left(\frac{1}{\sigma} \int_0^t (b^{(1)} - b^{(2)})\left(s, [X^{(2)}]_{[0,s]}\right) \cdot dW_s \right.\\
    \left. - \frac{1}{2\sigma^2} \int_0^t |b^{(2)} - b^{(1)}|^2 \left(s, [X^{(2)}]_{[0,s]}\right) ds \right).
\end{multline*}

\section{Proof of Lemma \ref{lmm:exp_estimate}}
\label{app:expconcentration}
Here we prove Lemma \ref{lmm:exp_estimate} in Section \ref{sec:auxilliary}. The critical point of the proof is the usage of he Marcinkiewicz-Zygmund type inequality (see for instance, Theorem 2.1 in \cite{rio2009moment}, Lemma 5.2 in \cite{lim2020quantitative}, or Lemma 3.3 in \cite{li2023solving}).
\vskip2mm
\begin{proof}({\bf Proof of Lemma \ref{lmm:exp_estimate}.})
Fix $i$ and fix $t>0$. For $1 \leq k \leq N$ define
\begin{equation*}
    D_k := \sum_{j: j<k,j \neq i}A_{i,k}(t)\cdot A_{i,j}(t).
\end{equation*}
Then 
\begin{equation*}
    \sum_{j_1,j_2: j_1 \neq j_2, j_1 \neq i, j_2 \neq i} A_{i,j_1}(t) \cdot A_{i,j_2}(t) = 2\sum_{k: k\neq i}D_k.
\end{equation*}
Clearly, since $\mathbb{E}\left[A_{i,j_1}(t) \cdot A_{i,j_2}(t) \mid \bar{X}_i(t)\right] = \mathbb{E}\left[A_{i,j_1}(t)  \mid \bar{X}_i(t)\right] \cdot \mathbb{E}\left[ A_{i,j_2}(t) \mid \bar{X}_i(t)\right] = 0 $ ($j_1 \neq j_2$, $j_1 \neq i$, $j_2 \neq i$) by independency, and since $|A_{i,j}(t)|$ is uniformly upper-bounded by $2\|K\|_{\infty}$  by Assumption \ref{ass:K}, we know that $(D_k)_k$ is a sequence of $L^p$-martingale differences ($p \geq 2$) with respect to the filtration $\mathcal{F}_k := \sigma \left(\bar{X}_1(t), \dots \bar{X}_k(t);\bar{X}_{i}(t) \right).$ That is, for each $k\ge 1,$ $D_k$ is $\mathcal{F}_k$-measurable, $D_k \in L^p$ and $\mathbb{E}\left[D_k \mid \mathcal{F}_{k-1}\right] = 0.$ This then enables one to apply the Marcinkiewicz-Zygmund type inequality, and to obtain
\begin{equation*}
    \| \sum_{k:k\neq i} D_k \|_{L^p}^2 \leq (p-1) \sum_{k:k\neq i} \| D_k \|_{L^p}^2,\quad \forall p \geq 2.
\end{equation*}
Moreover, for each $k \neq i$, define the sequence
\begin{equation*}
    B_j^k = A_{i,k}(t) \cdot A_{i,j}(t), \quad j < k, j \neq i.
\end{equation*}
Clearly, $D_k = \sum_{j: j<k,j\neq i}B_j^k$, $(B_j^k)_j$ is a sequence of $L^p$-martingale differences ($p\geq 2$) with respect to the filtration $\hat{\mathcal{F}}_j := \sigma\left(\bar{X}_1(t), \dots \bar{X}_j(t); \bar{X}_k(t),\bar{X}_i(t) \right)$, and $\mathbb{E}\left[B_j^k \mid \hat{\mathcal{F}}_{j-1}\right] = 0$. Using the Marcinkiewicz-Zygmund type inequality again, one obtains
\begin{equation*}
    \|D_k\|_{L^p}^2 \leq (p-1) \sum_{j:j<k,j\neq i}\| B^k_j\|_{L^p}^2.
\end{equation*}
Now  Taylor's expansion gives
\begin{equation*}
\begin{aligned}
    \mathbb{E}\Biggl[\exp\biggl(\frac{2\eta}{N-1}\sum_{k:k\neq i} D_k \biggr)\mid \Biggr.&\Biggl. \bar{X}_i(t)\Biggr] = 1 + \sum_{p=2}^{\infty}\frac{(2\eta^p)}{p!(N-1)^{p}} \| \sum_{k:k\neq i} D_k \|_{L^p}^p\\
    &\leq 1 + \sum_{p=2}^{\infty}\frac{(2\eta)^p (p-1)^{\frac{p}{2}}}{p!(N-1)^{p}}\left(\sum_{k:k\neq i} \| D_k\|_{L^p}^2 \right)^{\frac{p}{2}}\\
    &\leq 1 + \sum_{p=2}^{\infty}\frac{(2\eta)^p (p-1)^{\frac{p}{2}}}{p!(N-1)^{p}}\left(\sum_{k:k\neq i} (p-1) \sum_{j:j<k,j\neq i}\| B^k_j\|_{L^p}^2 \right)^{\frac{p}{2}}\\
    & \le 1 + \sum_{p=2}^{\infty} \left(4\sqrt{2} \|K \|_{\infty}^2 \eta \right)^p \frac{(p-1)^p}{p!} \left( \frac{N-2}{N-1}\right)^{\frac{p}{2}}.
\end{aligned}
\end{equation*}
Note that all $L^p$ norm above is associated with the conditional expectation 
$\mathbb{E}\left[\cdot \mid \bar{X}_i(t)\right]$. For $N \geq 2$, $ \frac{N-2}{N-1} < 1$. Moreover, by Stirling's formula, there exists $\theta_p \in (0,1)$ such that
\begin{equation*}
     \frac{(p-1)^{p}}{p!} = \frac{(p-1)^{p} e^p e^{-\frac{\theta_p}{12p}}}{p^p \sqrt{2\pi p}} \leq e^p,\quad \forall p \geq 2.
\end{equation*}
Hence, if we choose $\eta \in (0,1/(4\sqrt{2}e\|K\|_{\infty}^2))$,
\begin{equation*}
      \mathbb{E}\left[\exp\left(\frac{2\eta}{N-1}\sum_{k:k\neq i} D_k \right)\mid \bar{X}_i(t)\right] \leq 1 + \sum_{p=2}^{\infty} \left(4\sqrt{2}e\|K\|_{\infty}^2\eta \right)^p \leq \frac{1}{1 - 4\sqrt{2}e\|K\|_{\infty}^2\eta} < + \infty.
\end{equation*}
\hfill
\end{proof}

\bibliographystyle{plain}
\bibliography{main}

\end{document}